\newtheorem{thm}{Theorem}
\newtheorem{lem}{Lemma}
\newtheorem{cor}[thm]{Corollary}
\theoremstyle{definition}
\newtheorem{defn}{Definition}
\theoremstyle{remark}
\newcommand{\pt}{\mathrm{pt}}
\DeclareMathOperator{\uhind}{\overline{ind}}
\newcommand{\Gcat}{\mathop{G\textrm{-}\mathrm{cat}}}
\newcommand{\Agen}{\mathop{\mathcal A\textrm{-}\mathrm{gen}}}
\newcommand{\OGXgen}{\mathop{\mathcal O_G(X)\textrm{-}\mathrm{gen}}}
\newcommand{\OHXgen}{\mathop{\mathcal O_H(X)\textrm{-}\mathrm{gen}}}
\newcommand{\OSnXgen}{\mathop{\mathcal O_{\mathfrak S_n}(X)\textrm{-}\mathrm{gen}}}
\newcommand{\Ggen}{\mathop{G\textrm{-}\mathrm{gen}}}
\newcommand{\Zpkgen}{\mathop{(Z_p)^k\textrm{-}\mathrm{gen}}}
\DeclareMathOperator{\Hom}{Hom}
\DeclareMathOperator{\Map}{Map}
\DeclareMathOperator{\Ind}{Ind}
\renewcommand{\int}{\mathop{\rm int}}
\renewcommand{\epsilon}{\varepsilon}
\begin{document}

\title{The genus and the category of configuration spaces}
\author{R.N.~Karasev}
\email{r\_n\_karasev@mail.ru}
\address{
Roman Karasev, Dept. of Mathematics, Moscow Institute of Physics
and Technology, Institutskiy per. 9, Dolgoprudny, Russia 141700}
\thanks{This research was supported by the President of the Russian Federation grant No. MK-1005.2008.1, and partially supported by the Dynasty Foundation.}

\keywords{configuration space, equivariant topology, Lyusternik-Schnirelmann category}

\subjclass[2000]{55R80,57S17,14N20}

\begin{abstract}
In this paper configuration spaces of smooth manifolds are considered. The accent is made on actions of certain groups (mostly $p$-tori) on this spaces by permuting their points. For such spaces the cohomological index, the genus in the sense of Krasnosel'skii-Schwarz, and the equivariant Lyusternik-Schnirelmann category are estimated from below, and some corollaries for functions on configuration spaces are deduced.
\end{abstract}

\maketitle

\section{Introduction}

The main subject of this paper is configuration spaces of smooth manifolds. Let $M$ be a smooth manifold. A particular case of configuration space is the space of all sequences $(x_1, \ldots, x_n)\in M^n$ of pairwise distinct points. This classical configuration space is of particular interest as the space of configurations of $n$ points in classical mechanics. Some topological properties of this space are studied in the book~\cite{fh2001}.

In this paper we are interested in the topology of the obvious action of the symmetric group $\mathfrak S_n$ on the space of $n$-configurations; we also consider the restriction of this action to some subgroups $G\subseteq \mathfrak S_n$. The space of $n$-configurations in $\mathbb R^k$ was studied in \cite{fuks1970,vass1988,rot2008}. The main object of these studies was some measure of complexity of the configuration space, such as the homological index, or the genus in the sense of Krasnosel'skii-Schwarz~\cite{kr1952,schw1957,schw1966}, see Section~\ref{genus-and-cat}. The estimates on the genus of the configuration space of $n$ pairwise distinct points in $\mathbb R^2$ were used to study the ``topological complexity'' of finding complex roots of a polynomial in~\cite{vass1988}, see also~\cite{sma1987}.

Along with the configuration space of pairwise distinct points, we consider more general configuration spaces. By a configuration space for $M$ we mean some subspace of its Cartesian power $M^n$, defined by removing configurations with some multiple point coincidences. For example, some pairs in the configuration may be required to be distinct points, or every $k$ of points in the configuration may be required not to be the same point. One example of such a configuration space was used in the author's paper~\cite{kar2008bil} and the previous studies~\cite{fartab99,far00} of billiards in smooth convex bodies in $\mathbb R^d$. In the cited papers, some estimates on the homological index (see Sections~\ref{eq-cohomology} and \ref{eq-cohomology-p-tori} for definitions) of the configuration space, combined with the Lyusternik-Schnirelmann theory, gave some lower bounds on the number of distinct closed billiard trajectories. Another application of configuration spaces with general coincidence constraints arises in the study of coincidences of maps, see some examples in~\cite{vol1992,vol2005}.

As it was already mentioned, in addition to considering the action of $\mathfrak S_n$ on configuration spaces we also consider the actions of $G\subset \mathfrak S_n$. Let the group $G$ act on the points freely and transitively. In this case the configuration space may be considered as the space of all maps $G\to M$, denote it $\Map(G, M)$. The group $G$ acts on itself by left multiplications, so $G$ acts on $\Map(G, M)$ by right multiplications. We denote this action by
$$
(g, \phi)\in G\times \Map(G, M)\mapsto \phi^g,
$$ 
by definition put $\phi^g(x) = \phi(gx)$ for any $g,x\in G$.

The paper is organized in the following way. The main tool is considering the equivariant cohomology of configuration spaces. In Section~\ref{eq-cohomology} we collect the general notions on the equivariant cohomology, define a measure of homological complexity (index) of $G$-spaces. In Section~\ref{eq-cohomology-p-tori} we focus on the case of $G$ being a $p$-torus, introduce another index and prove a new result: Theorem~\ref{indind} on estimating the index from below.

In Section~\ref{index-conf} some lower bounds of topological complexity (indexes) of particular configuration spaces with respect to the group action are given.

In Sections~\ref{symm-config} and \ref{genus-and-cat} the main results on configuration spaces are formulated and proved. The results of Section~\ref{symm-config} show the existence of a configuration with certain symmetrical equalities, the author is particularly interested in metric symmetries, but the results are stated for general functions instead of a metric. Results of Section~\ref{genus-and-cat} estimate the genus in the sense of Krasnosel'skii-Schwarz of configuration spaces and may be used to find lower bounds on the number of critical points of a smooth symmetric function using the Lyusternik-Schnirelmann theory.

Note that in this paper Theorems~\ref{constr-conf-gen} and~\ref{pwconstr-gen} and their corresponding Lemmas~\ref{constr-conf-ind} and \ref{pwconstr-ind} (see below) are actually proved for $\mathbb R^d$. It seems plausible that for a closed manifold $M$ the estimate on the index may be larger, as it was for the ``billiard'' configuration space of the sphere in~\cite{kar2008bil}, where the bound was larger by $1$ compared to Theorem~\ref{constr-conf-gen}.

The author thanks A.Yu.~Volovikov for useful discussions and remarks.

\section{Equivariant cohomology of $G$-spaces}
\label{eq-cohomology}

In this section we state some facts on the equivariant cohomology and define a homological measure of complexity of a $G$-space.

We consider topological spaces with continuous action of a finite group $G$ and continuous maps between such spaces that commute with the action of $G$. We call them $G$-spaces and $G$-maps.

The facts in this section are quite well known, see books~\cite{hsiang1975,bart1993}.

We consider the equivariant cohomology (in the sense of Borel) of $G$-spaces, defined as 
$$
H_G^*(X, M) = H^*((X\times EG)/G, M),
$$
where some $\mathbb Z[G]$-module $M$ (acted on by the fundamental group of $(X\times EG)/G$) gives the coefficients for the cohomology.

Consider the $G$-equivariant cohomology of the point $H_G^*(M) = H_G^*(\pt, M) = H^*(BG, M)$. For any $G$-space $X$ the natural map $X\to\pt$ induces the natural map of cohomology $\pi_X^* : H_G^*(M)\to H_G^*(X, M)$.

\begin{defn}
The \emph{upper cohomological index} of a $G$-space $X$ with coefficients in $M$ is the maximal $n$ such that the natural map
$$
H_G^n(M)\to H_G^n(X, M)
$$
is nontrivial. Denote the upper index $\uhind_M X = n$. Denote the supremum over all $\mathbb Z[G]$-modules
$$
\uhind_G X = \sup_M \uhind_M X.
$$
\end{defn}

If a $G$-space $X$ has fixed points, its cohomology $H^*_G(X, M)$ contains $H^*_G(M)$ as a summand, thus its upper index is obviously $+\infty$.

The following property is obvious by definition.

\begin{lem}[Monotonicity of index]
If there exists a $G$-map $f:X\to Y$ then $\uhind_M X\le \uhind_M Y$ for any coefficients $M$.
\end{lem}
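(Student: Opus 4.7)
The plan is to use the naturality of the structure maps $\pi_X^*$ and $\pi_Y^*$ coming from projections to a point. The key point is that the map $X \to \pt$ factors as $X \xrightarrow{f} Y \to \pt$, so after applying the Borel construction and passing to equivariant cohomology, one obtains a commutative triangle relating $\pi_X^*$, $\pi_Y^*$ and the induced map $f^* \colon H_G^*(Y,M) \to H_G^*(X,M)$.

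More concretely, first I would note that a $G$-map $f \colon X \to Y$ induces a continuous map $\bar f \colon (X \times EG)/G \to (Y \times EG)/G$ of Borel constructions, and hence a map $f^* \colon H_G^*(Y,M) \to H_G^*(X,M)$ on equivariant cohomology (the coefficient module $M$ pulls back correctly because fundamental groups map naturally). Next I would record the commutative diagram
$$
\begin{CD}
H_G^n(M) @= H_G^n(M) \\
@V{\pi_Y^*}VV @VV{\pi_X^*}V \\
H_G^n(Y,M) @>{f^*}>> H_G^n(X,M),
\end{CD}
$$
which is just the functoriality of $H_G^*(-,M)$ applied to $X \xrightarrow{f} Y \to \pt$.

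From here the lemma is immediate: if $\pi_X^* = f^* \circ \pi_Y^*$ is nontrivial in degree $n$, then $\pi_Y^*$ must be nontrivial in degree $n$ as well. Thus any $n$ witnessing $\uhind_M X \ge n$ also witnesses $\uhind_M Y \ge n$, giving the desired inequality.

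There is no real obstacle here; the content is purely formal and the only thing to double-check is that the induced map $\bar f$ respects the local coefficient system $M$ on both sides, which follows from the fact that $f$ is $G$-equivariant and thus the induced map on the Borel constructions is compatible with the classifying maps to $BG$.
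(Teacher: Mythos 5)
Your proof is correct and is precisely the formal functoriality argument that the paper has in mind when it declares this property ``obvious by definition'' (the paper gives no written proof). Nothing further is needed.
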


The $G$-equivariant cohomology is often calculated from the spectral sequence of the fibration $X_G = (X\times EG)/G\to BG$ with fiber $X$. Here we state the lemma from~\cite[Section~11.4]{mcc2001}.

\begin{lem}
\label{specseqeq}
Let $R$ be a ring with trivial $G$-action. There exists a spectral sequence with $E_2$-term
$$
E_2^{x, y} = H^x(BG, \mathcal H^y(X, R)),
$$
that converges to the graded module, associated with the filtration of $H_G^*(X, R)$.

The system of coefficients $\mathcal H^y(X, R)$ is obtained from the cohomology $H^y(X, R)$ by the action of $G = \pi_1(BG)$. The differentials of this spectral sequence are homomorphisms of $H^*(BG, R)$-modules.
\end{lem}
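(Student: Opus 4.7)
The plan is to recognize the statement as the Leray--Serre spectral sequence applied to the Borel fibration, and then to verify the two refinements: that the local coefficient system is the stated one, and that the differentials are $H^*(BG,R)$-linear.

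First I would set up the geometric picture. Since $EG\to BG$ is a principal $G$-bundle and $X$ carries a $G$-action, the associated bundle construction gives a locally trivial fiber bundle
$$
X\longrightarrow X_G = (X\times EG)/G\stackrel{\pi}{\longrightarrow} BG
$$
with fiber $X$. Working with a CW-model of $BG$ (and pulling back the bundle), the Leray--Serre spectral sequence of this fibration converges to $H^*(X_G, R)=H^*_G(X,R)$, filtered by the preimages of the skeleta of $BG$. On the $E_2$-page one obtains $E_2^{x,y}=H^x(BG,\mathcal H^y(X,R))$ where $\mathcal H^y(X,R)$ is the local system of $R$-modules whose fiber is $H^y(X,R)$.

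Next I would identify the monodromy of this local system. The monodromy along a loop $\gamma$ in $BG$ is computed by lifting $\gamma$ to a path in $EG$ between two points of the same $G$-orbit; the lift produces an element $g\in G$, and the induced map on the fiber $X$ is exactly the $G$-action by $g$. Passing to cohomology and using that the coefficients $R$ are $G$-trivial, the monodromy of $\mathcal H^y(X,R)$ is precisely the action induced by $G$ on $H^y(X,R)$, identifying $G$ with $\pi_1(BG)$.

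Finally I would establish the module structure. The projection $\pi\colon X_G\to BG$ induces a ring map $\pi^*\colon H^*(BG,R)\to H^*_G(X,R)$, and cup product with classes in the image gives an $H^*(BG,R)$-module structure on the whole spectral sequence. Concretely, classes from the base sit in the column $x=*$, $y=0$ of $E_2$ as permanent cocycles, and cup product on the filtered complex respects the filtration. The Leibniz rule for the differentials, combined with the fact that pullbacks from the base are $d_r$-cocycles for every $r$, forces each $d_r$ to be a homomorphism of $H^*(BG,R)$-modules. The only step requiring genuine care is this last one: showing that the multiplicative structure on $E_\infty$ is inherited from a filtered multiplicative structure on the cochain level, which is standard once one chooses a cochain model (e.g.\ the double complex of singular cochains on the skeletal filtration of $X_G$). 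Thus the main obstacle is bookkeeping rather than a new idea, and is precisely the content of~\cite[Section~11.4]{mcc2001} that we invoke.
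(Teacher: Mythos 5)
Your proposal is correct and matches the intended justification: the paper does not prove this lemma but cites it directly from Section~11.4 of McCleary's book, and your argument (Leray--Serre spectral sequence of the Borel fibration, monodromy identification of the local system via $\pi_1(BG)=G$, and $H^*(BG,R)$-linearity of the differentials from the multiplicative structure and permanence of classes pulled back from the base) is exactly the standard content of that reference. Nothing further is needed.
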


In this lemma we denote the grading in the spectral sequence by $(x, y)$ (not usual), because the letter $p$ is reserved for the prime number throughout this paper.

\section{Equivariant cohomology of $G$-spaces for $G=(Z_p)^k$}
\label{eq-cohomology-p-tori}

In this section we study in greater detail the equivariant cohomology of $G$-spaces in the case $G=(Z_p)^k$.

In this section the cohomology is taken with coefficients $Z_p$, in notations we omit the coefficients. For a group $G=(Z_p)^k$ the algebra $A_G=H_G^*(Z_p)$ has the following structure (see~\cite{hsiang1975}). In the case $p>2$ it has $2k$ multiplicative generators $v_i,u_i$ with dimensions $\dim v_i = 1$ and $\dim u_i = 2$ and relations
$$
v_i^2 = 0,\quad\beta{v_i} = u_i.
$$
We denote $\beta(x)$ the Bockstein homomorphism. 

In the case $p=2$ the algebra $A_G$ is the algebra of polynomials of $k$ one-dimensional generators $v_i$.

Consider again the spectral sequence from Lemma~\ref{specseqeq}. For every term $E_r(X)$ of this spectral sequence there is a natural map $\pi^*_r : A_G\to E_r(X)$ (it maps $A_G$ to the bottom row of $E_r(X)$).

\begin{defn}
Denote the kernel of the map $\pi^*_r$ by $\Ind^r_G X$. 
\end{defn}

Let us list the properties of $\Ind^r_G X$, that are obvious by the definition. We omit the subscript $G$ when it is clear what group is meant.

\begin{itemize}
\item 
(Monotonicity) If there is a $G$-map $f:X\to Y$, then $\Ind^r X\supseteq \Ind^r Y$.

\item
$\Ind^{r+1} X$ may differ from $\Ind^r X$ only in dimensions $\ge r$.

\item
$\bigcup_r \Ind^r X = \Ind X = \ker \pi_X^* : A_G\to H_G^*(X)$.
\end{itemize}

The first property in this list is very useful to prove nonexistence of $G$-maps. Following~\cite{vol2000,vol2005} we define a numeric invariant of this system $\Ind^r X$, that is enough for us.

\begin{defn}
Put 
$$
i_G(X) = \max \{r : \Ind_G^r X = 0\}.
$$
\end{defn}

It is easy to see that $i(X)\ge 1$ for any $G$-space $X$, $i(X)\ge 2$ for a connected $G$-space $X$, and $i(X)$ may be equal to $+\infty$. The following properties are quite clear from the definition.

\begin{itemize}
\item
(Monotonicity) If there is a $G$-map $f:X\to Y$, then $i_G(X) \le i_G(Y)$.

\item
If $H^m(X) = 0$ for $m > n$, then either $i_G(X)=+\infty$ or $i_G(X)\le n+1$.

\item 
If $\tilde H^m(X) = 0$ for $m <n$, then $i_G(X)\ge n + 1$.
\end{itemize}

The following lemma from~\cite{vol2005} (Lemma~2.1) tells more about the monotonicity.

\begin{lem}
\label{same-i-map}
Let $X, Y$ be connected paracompact $G$-spaces and let $f:X\to Y$ be a $G$-map. Suppose that $i_G(X)=i_G(Y)=n+1$. Then the map $f^*: H^n(Y)\to H^n(X)$ is nontrivial.
\end{lem}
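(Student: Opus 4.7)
The plan is to argue by contradiction: assume $f^*\colon H^n(Y)\to H^n(X)$ is the zero map, and derive that some nonzero element of $A_G$ lies in $\Ind^{n+1}_G X$, contradicting $i_G(X)=n+1$.

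First I would transfer the hypothesis on $f^*$ from ordinary cohomology to the spectral sequence of Lemma~\ref{specseqeq}. Because $f$ is $G$-equivariant, $f^*\colon H^n(Y)\to H^n(X)$ is a map of $\mathbb Z[G]$-modules, hence a map of the local coefficient systems $\mathcal H^n(-)$ on $BG$. If it is zero, then the induced map $f^*\colon E_2^{x,n}(Y)=H^x(BG,\mathcal H^n(Y))\to E_2^{x,n}(X)$ is zero for every $x$. Since $f$ induces a morphism of spectral sequences, the rows $y=n$ on every later page $E_r^{*,n}$ are subquotients built out of $E_2^{*,n}$, and the induced map on them is the one induced by $f^*$ on $E_2^{*,n}$; hence $f^*\colon E_r^{*,n}(Y)\to E_r^{*,n}(X)$ vanishes for all $r\ge 2$.

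Next I would exploit $i_G(Y)=n+1$ to produce a witnessing class. By definition $\Ind^{n+1}_G Y=0$ but $\Ind^{n+2}_G Y\ne 0$, so there is some $\alpha\in A_G$ with $\alpha\ne 0$, $\pi^*_{n+1,Y}(\alpha)\ne 0$, and $\pi^*_{n+2,Y}(\alpha)=0$. The class $\pi^*_{n+1,Y}(\alpha)$ lives in the bottom row $E_{n+1}^{*,0}(Y)$, where outgoing differentials vanish for dimensional reasons, so the only way it can die on the next page is to be hit: there exists $\beta\in E_{n+1}^{*,n}(Y)$ with $d_{n+1}(\beta)=\pi^*_{n+1,Y}(\alpha)$. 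This is exactly the point where the bidegree of the relevant differential matches the row $y=n$ handled in the first step.

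Finally I would combine the two inputs via naturality. The composition $X\to Y\to\pt$ equals the projection $X\to\pt$, so $f^*\circ\pi^*_{n+1,Y}=\pi^*_{n+1,X}$ on $A_G$. Naturality of $f^*$ on the spectral sequence gives $f^*(d_{n+1}(\beta))=d_{n+1}(f^*(\beta))$, and the first step yields $f^*(\beta)=0$ in $E_{n+1}^{*,n}(X)$. Combining,
\[
\pi^*_{n+1,X}(\alpha)=f^*(\pi^*_{n+1,Y}(\alpha))=f^*(d_{n+1}(\beta))=d_{n+1}(f^*(\beta))=0.
\]
But $i_G(X)=n+1$ means $\Ind^{n+1}_G X=0$, so $\pi^*_{n+1,X}$ is injective and $\alpha=0$, contradicting our choice of $\alpha$. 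The only nonroutine step is the first one: one has to be careful that vanishing of $f^*$ on $H^n$ really propagates to all pages of the spectral sequence on the row $y=n$, which rests on naturality together with the fact that these rows are subquotients of a row on which the map is already zero.
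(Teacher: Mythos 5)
Your proof is correct. The paper itself contains no proof of this lemma --- it is quoted as Lemma~2.1 of~\cite{vol2005} --- but your argument is exactly the standard spectral-sequence proof of that result: vanishing of $f^*$ on $H^n$ propagates to the row $y=n$ of every page $E_r$ by naturality (each such row being a subquotient of $E_2^{*,n}$), while $i_G(Y)=n+1$ forces the class $\pi^*_{n+1,Y}(\alpha)$ to be killed by a $d_{n+1}$ coming from that very row, and injectivity of $\pi^*_{n+1,X}$ (from $\Ind^{n+1}_G X=0$) then gives the contradiction. The one hypothesis you use implicitly is connectedness of $X$ and $Y$, which makes both bottom rows equal to $A_G$ so that $f^*\circ\pi^*_{n+1,Y}=\pi^*_{n+1,X}$; that is part of the statement, so the argument is complete.
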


We need the following result to estimate $i_G(X)$ from below. Some special case of it was used in the proof of Theorem~4 in the paper~\cite{kar2008bil}.

\begin{thm}
\label{indind}
Let $G=(Z_p)^k$, let $G$-space $X$ be connected. Suppose the groups $H^m(X, Z_p)$ for $m < n$ are composed of finite-dimensional $Z_p[G]$-modules, induced from proper subgroups $H\subset G$. Then
$$
i_G(X)\ge n + 1.
$$ 
\end{thm}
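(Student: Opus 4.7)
The strategy is to analyze the spectral sequence of Lemma~\ref{specseqeq} and show by induction on $r$ that no differential $d_r$ with $2\le r\le n$ carries a nontrivial class into the bottom row $E_r^{*,0}$. Since $X$ is connected we have $E_2^{*,0}=H^*(BG,Z_p)=A_G$, and assuming inductively that $E_r^{*,0}=A_G$, the goal at step $r$ is to show $d_r\colon E_r^{*-r,\,r-1}\to E_r^{*,0}$ vanishes; iterating up to $r=n$ then gives $E_{n+1}^{*,0}=A_G$, i.e.\ $\Ind^{n+1}_G X=0$, and hence $i_G(X)\ge n+1$.

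The main tool is Shapiro's lemma: for a proper subgroup $H\subsetneq G$ and a $Z_p[H]$-module $W$ one has a natural isomorphism $H^*(BG,\Ind_H^G W)\cong H^*(BH,W)$ of $A_G$-modules, with $A_G$ acting on the right-hand side through the restriction homomorphism $A_G\to A_H$. For each proper subgroup $H\subsetneq G=(Z_p)^k$ the quotient $G/H$ is a nontrivial $p$-torus, so pulling back a polynomial generator of $A_{G/H}$ along $G\to G/H$ produces an element $\chi_H\in A_G$ of degree $2$ (or of degree $1$ when $p=2$) that restricts to zero in $A_H$ yet is itself a polynomial generator of $A_G$, in particular a nonzero divisor. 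Since $G=(Z_p)^k$ has only finitely many subgroups, the product $\chi:=\prod_{H\subsetneq G}\chi_H$ is a well-defined nonzero divisor in $A_G$.

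The hypothesis on $H^{r-1}(X,Z_p)$ then yields, via Shapiro together with the coefficient long exact sequences associated to the filtration whose subquotients are induced from proper subgroups, that some finite power $\chi^N$ annihilates the row $E_2^{*,\,r-1}$ and hence also the subquotient $E_r^{*-r,\,r-1}$. Because $d_r$ is an $A_G$-module map (Lemma~\ref{specseqeq}), for any $\alpha\in E_r^{*-r,\,r-1}$ one obtains $\chi^N\cdot d_r(\alpha)=d_r(\chi^N\cdot\alpha)=0$ inside $E_r^{*,0}=A_G$, and since $\chi^N$ is a nonzero divisor this forces $d_r(\alpha)=0$, closing the induction.

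The main technical point will be arranging a single element of $A_G$ that is both a nonzero divisor and simultaneously annihilates every cohomology class arising from a module induced from a proper subgroup; the nonzero-divisor property is essential, because otherwise the $A_G$-linearity identity $\chi^N d_r(\alpha)=d_r(\chi^N\alpha)=0$ would not permit the conclusion $d_r(\alpha)=0$. Treating the phrase ``composed of'' as a filtration with induced composition factors, and iterating the coefficient long exact sequence to promote annihilation from subquotients to the whole row, is the other point to handle carefully—but the finite subgroup lattice of $(Z_p)^k$ keeps this bookkeeping finite.
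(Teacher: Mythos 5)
Your proof is correct, and while it follows the same skeleton as the paper's argument (the Borel spectral sequence of Lemma~\ref{specseqeq}, Shapiro's lemma to identify $H^*(BG,\Ind_H^G W)$ with $H^*(BH,W)$ as an $A_G$-module, the $A_G$-linearity of the differentials, and the fact that the bottom row is $A_G$ until a differential hits it), the mechanism you use to kill the differentials $d_2,\dots,d_n$ is genuinely different. The paper argues by growth rate: each row $E_2^{*,y}$ with $1\le y\le n-1$ has Hilbert polynomial of degree at most $k-1$, and an $A_G$-module $L$ whose Hilbert polynomial has small degree satisfies $\Hom_{A_G}(L,A_G)=0$, because each element of $L$ is annihilated by \emph{some} nontrivial element of the polynomial subalgebra $S_G$ while $A_G$ has no $S_G$-torsion. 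You instead exhibit a single \emph{universal} annihilator: the product $\chi$ of the pullbacks of polynomial generators along the quotients $G\to G/H$ over the finitely many proper subgroups $H$, which restricts to zero on every proper $A_H$ (hence a power of it kills every row built from induced modules, by the filtration/long-exact-sequence bookkeeping you describe) and is a nonzero divisor in $A_G$ (being, up to a change of basis, a product of polynomial generators). Your route is more explicit and arguably more robust: it avoids the Hilbert-polynomial degree count entirely, which in the paper's write-up is slightly off (a free rank-one $S_G$-module also has Hilbert polynomial of degree $k-1$, the same bound the paper assigns to the rows; the intended bounds are $k-2$ for the rows versus $k-1$ for a free module, and the argument is easily repaired). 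The paper's route is shorter and needs no analysis of the subgroup lattice, only the qualitative statement that induced modules have slow-growing cohomology. Both arguments hinge on the same two structural facts, and your induction on the page number, the passage of annihilation to the subquotients $E_r^{*,r-1}$, and the final identification $E_{n+1}^{*,0}=A_G$ with $\Ind_G^{n+1}X=0$ are all handled correctly.
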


A $Z_p[G]$-module $M$ is induced from $Z_p[H]$-module $N$ iff $M=Z_p[G]\otimes_{Z_p[H]} N$. A finite-dimensional module is induced iff it is coinduced: $M=\Hom_{Z_p[H]}(Z_p[G], N)$ (see~\cite{bro1982}, Ch.~III, Proposition~5.9).

\begin{proof}
Put $S_G=A_G$ for $p=2$, and $S_G=Z_p[u_1,\ldots,u_k]$ for $p>2$. This is a subalgebra of polynomials in $A_G$.

If a $G$-module $M$ is coinduced from an $H$-module $N$ then $H_G^*(M) = H_H^*(N)$ (see~\cite{bro1982}, Ch.~III, Proposition~6.2). In this case the Hilbert polynomial of $H_G^*(M)$ has degree no more than $k-1$. If a $G$-module is composed of induced modules, then it follows from the cohomology exact sequence that the Hilbert polynomial of its cohomology has degree $\le k-1$, or otherwise the cohomology exact sequence could not be exact for large cohomology dimensions.

Therefore, the rows $1,\ldots, n-1$ in $E_2$-term of the spectral sequence of Lemma~\ref{specseqeq} have Hilbert polynomials of degree $\le k-1$. In all next terms $E_r$ ($r\ge 2$) this fact remains true, since the dimensions $\dim E_r^{x, y}$ decrease, and the Hilbert polynomial of a row cannot get a larger degree. 

Now let us show that the image of the differentials $d_2,\ldots,d_n$ in the bottom row of the spectral sequence is zero (that is equivalent to $i(X)\ge n+1$). Indeed, if an $A_G$-module $L$ has Hilbert polynomial of degree $\le k-1$ then
$$
\Hom_{A_G}(L, A_G) = 0,
$$
since every $l\in L$ is annihilated by some nontrivial element of $S_G$ (or the Hilbert polynomial would have degree $\ge k$), and none of the nonzero elements of $A_G$ is annihilated by nontrivial elements of $S_G$.
\end{proof}

\section{Definitions of configuration spaces}

In this section we give the definitions of different configuration spaces and introduce some notation.

\begin{defn}
Let $\Delta: M\to \Map(G, M)$ be the diagonal map, i.e. $\Delta(x)$ is the constant map of $G$ to $x\in M$.
Denote $\Map_\Delta(G, M) = \Map(G, M)\setminus\Delta(M)$. This is the space of nonconstant maps $G\to M$.
\end{defn}

\begin{defn}
Denote $[n]=\{1,2,\ldots, n\}$, and denote the configuration space
$$
V(n, M) = \Map([n], M).
$$
\end{defn}

\begin{defn}
Let $\mathcal S$ be a nonempty family of subsets in $[n]$. Denote $V\left(n, M, \mathcal S\right)$ the set of maps $f:[n]\to M$ such that every $f|_S$ for $S\in\mathcal S$ is nonconstant. We call the family $\mathcal S$ a \emph{constraint system}. Denote 
$$
w(\mathcal S) = \min_{S\in\mathcal S} |S|.
$$
We consider only the nontrivial case $w(\mathcal S)\ge 2$.
\end{defn}

If the configurations are considered as maps $G\to M$, the following definition is needed.

\begin{defn}
Let $\mathcal S$ be a nonempty family of subsets in $G$, and let $\mathcal S$ be invariant with respect to $G$-action on itself. Denote $V\left(G, M, \mathcal S\right)$ the set of maps $f:G\to M$ such that every $f|_S$ for $S\in\mathcal S$ is nonconstant. Again, we call the family $\mathcal S$ a \emph{constraint system}. The number $w(\mathcal S)$ is defined as in the previous definition and is considered to be at least $2$.
\end{defn}

The constraint system $\mathcal S=\{G\}$ gives the space $\Map_\Delta(G, M)$, defined above. 

For a finite set $X$ denote $\binom{X}{w}$ the family of all $w$-element subsets of $X$. The classical configuration space, the set of $n$-tuples of pairwise distinct points in $M^n$, is denoted $V\left(n, M, \binom{[n]}{2}\right)$ in our notation. The spaces $V\left(n, M, \binom{[n]}{w}\right)$ are called configuration-like spaces in~\cite{colusk1976,vol2007}, here we call them simply configuration spaces.

\section{The index of configuration spaces}
\label{index-conf}

Now we are ready to state and prove the core results of this paper, the lower bounds on indexes of configuration spaces.

\begin{lem}
\label{conf-space-ind} Let $G=(Z_p)^k$, let $M$ be a smooth oriented (if $p\not=2$) closed manifold of dimension $d$. Then
$$
i_G\left(\Map_\Delta(G, M)\right) \ge d(p^k-1) + 1.
$$
\end{lem}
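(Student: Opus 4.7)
The plan is to verify the hypothesis of Theorem~\ref{indind} for $X=\Map_\Delta(G,M)$ with $n=d(p^k-1)=:N$, and then to invoke the theorem. The technical content is the analysis of the $Z_p[G]$-module structure of $H^m(X;Z_p)$ for $1\le m<N$.

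First I would identify $H^m(\Map_\Delta(G,M);Z_p)$ with $H^m(M^{p^k};Z_p)$ in the relevant range via a Thom/Gysin argument. The small diagonal $\Delta(M)\cong M$ is the $G$-fixed subspace of $M^{p^k}$, and its normal bundle is $\nu=TM\otimes\bar R$ of real rank $N$, where $\bar R$ is the reduced real regular representation of $G$. The orientability hypothesis, together with the fact that for $p>2$ every nontrivial $g\in G$ acts on $G$ as a product of $p$-cycles (hence as an even permutation) and that for $p=2$ orientation is automatic, make $\nu$ equivariantly $Z_p$-orientable, and the Thom isomorphism gives $H^m(M^{p^k},\Map_\Delta;Z_p)\cong H^{m-N}(M;Z_p)$. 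This vanishes for $m<N$, so the long exact sequence of the pair collapses to give an isomorphism $H^m(\Map_\Delta;Z_p)\cong H^m(M^{p^k};Z_p)$ of $Z_p[G]$-modules for $m\le N-1$, using that the connecting map $Z_p=H^0(M)\to H^N(M^{p^k})$ sends $1$ to the nonzero diagonal class $[\Delta]$ (Poincar\'e duality on the closed oriented manifold $M^{p^k}$).

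Next I would decompose $H^m(M^{p^k};Z_p)$ via K\"unneth into summands indexed by multi-indices $\alpha=(a_1,\ldots,a_{p^k})$ with $\sum a_j=m$, partitioned into $G$-orbits. Koszul signs are trivial in our setting: for $p$ odd every $g\in G$ acts as an even permutation, and for $p=2$ there are no signs. Each $G$-orbit contributes a summand $\Ind_{H_\alpha}^G \bigotimes_{j}H^{a_j}(M;Z_p)$, where $H_\alpha$ is the stabilizer of $\alpha$; since $G$ acts transitively on coordinates, $H_\alpha=G$ exactly when $\alpha$ is a constant sequence $(a,\ldots,a)$. The non-constant-orbit summands are induced from proper subgroups and feed the Hilbert polynomial argument from the proof of Theorem~\ref{indind} directly.

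The hard part will be the constant-orbit summands $(H^a(M))^{\otimes p^k}$ sitting in degree $m=p^k a$, which are \emph{not} induced from a proper subgroup: their $G$-fixed subspace is spanned by diagonal classes $\omega^{\otimes p^k}$ for $\omega\in H^a(M;Z_p)$, contributing to row $p^k a$ of the Leray-Serre $E_2$-page an $A_G$-submodule with Hilbert polynomial of the same degree as $A_G$ itself. I would handle these by invoking the (iterated) total Steenrod power operation for the regular $G=(Z_p)^k$-action on $M^{p^k}$: each $\omega^{\otimes p^k}$ admits a lift to $H^{p^k a}_G(M^{p^k};Z_p)$, and restricting along the $G$-inclusion $\Map_\Delta\hookrightarrow M^{p^k}$ produces a lift in $H^{p^k a}_G(\Map_\Delta;Z_p)$. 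Consequently $\omega^{\otimes p^k}$ is a permanent cycle in the Serre spectral sequence of $(\Map_\Delta)_G\to BG$, so by $A_G$-linearity the full $A_G$-column $A_G\cdot \omega^{\otimes p^k}$ carries zero differential into the bottom row. Combining this with the Hilbert-polynomial argument for the induced summands shows that no differential $d_s$ with $2\le s\le N$ hits the bottom row, so $\Ind^{N+1}_G\Map_\Delta(G,M)=0$ and therefore $i_G(\Map_\Delta(G,M))\ge N+1=d(p^k-1)+1$.
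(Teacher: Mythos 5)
Your argument is essentially correct, but it takes a genuinely different route from the paper, and the place you flag as ``the hard part'' is exactly where you are forced to go beyond the tools the paper sets up. The paper does not verify the hypothesis of Theorem~\ref{indind} for $\Map_\Delta(G,M)$ at all --- indeed it cannot, for precisely the reason you identify: the diagonal K\"unneth summands $\omega^{\otimes p^k}$ in degrees $p^k a<d(p^k-1)$ are not induced from proper subgroups, so Theorem~\ref{indind} is not applicable as a black box and its proof must be re-run with an extra input. Your extra input (the iterated Steenrod/Evens power $P_G(\omega)\in H^{p^k a}_G(M^{p^k})$ restricting to $\omega^{\otimes p^k}$ on the fibre, hence making these classes permanent cycles, hence by $A_G$-linearity killing all differentials from those rows into the bottom row modulo the induced part) is correct and classical, but it is genuine additional machinery that you assert rather than prove, and you also need a small bookkeeping step at later pages: row $s-1$ of $E_s$ is only a subquotient of $T\oplus I$ (trivial part plus induced part), so one should note that $d_s$ vanishes on the image of $T$ and therefore factors through a subquotient of $I$, to which the Hilbert-polynomial argument applies. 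The paper instead argues much more economically: the sphere $S=\Map_\Delta(G,\mathbb R^d)\simeq S^{d(p^k-1)-1}$ inside $\Map_\Delta(G,M)$ gives $i_G\ge d(p^k-1)$ by monotonicity, and the extra $+1$ is obtained by contradiction using Lemma~\ref{same-i-map}: if $i_G$ were exactly $d(p^k-1)$, the fundamental class of the sphere would lift to $H^{d(p^k-1)-1}(\Map_\Delta(G,M))$, and chasing the long exact sequence of the pair $\bigl(\Map(G,M),\Map_\Delta(G,M)\bigr)$ together with the Thom isomorphism for the normal bundle of the diagonal would force the dual class of the diagonal to vanish in $H^{d(p^k-1)}(M^{p^k})$, contradicting Poincar\'e duality. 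That argument needs no K\"unneth decomposition, no module-theoretic analysis, and no power operations; your route buys a more systematic understanding of the $Z_p[G]$-module structure of $H^*(\Map_\Delta(G,M))$ (and would adapt to other constraint systems, in the spirit of Lemma~\ref{constr-conf-ind}), at the price of substantially heavier machinery.
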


Note that in~\cite{vol1992,vol2005} the index of $\Map_\Delta(G, M)$ was estimated from above. It was needed to show that (under some additional assumptions) for any continuous map $f:X\to M$ of a $G$-space $X$ to $M$ some $G$-orbit in $X$ is mapped to a point.

\begin{proof}
Denote $q=p^k$. Obviously, $M$ contains some copy of $\mathbb R^d$ and $\Map_\Delta(G, M)$ contains the respective copy of $S=\Map_\Delta(G, \mathbb R^d)$, the latter space is homotopy equivalent to a $d(q-1)-1$-dimensional sphere. For a sphere, it is obvious to see that $i(S) = d(q-1)$, thus $i(\Map_\Delta(G, M)) \ge d(q-1)$.

Denote for brevity $X = \Map(G, M)$ and $X_\Delta = \Map_\Delta(G, M)$.

Consider the contrary: $i(X_\Delta) = d(q-1)$. In this case by Lemma~\ref{same-i-map} the inclusion map $j : S\to X_\Delta$ of the sphere induces a nontrivial map $j^*: H^{d(q-1)-1}(X_\Delta)\to H^{d(q-1)-1}(S)$. Take the fundamental class $y\in H^{d(q-1)-1}(S)$ and consider $x\in H^{d(q-1)-1}(X_\Delta)$ such that $j^*(x) = y$. From the following diagram
\begin{equation}
\begin{CD}
\label{leseq}
H^{d(q-1)}(X) @<{\pi^*}<< H^{d(q-1)}(X, X_\Delta) @<{\delta}<< H^{d(q-1)-1}(X_\Delta) @<{\iota^*}<< H^{d(q-1)-1} (X)\\
@V{j^*}VV @V{j^*}VV @V{j^*}VV @V{j^*}VV\\
H^{d(q-1)}(\mathbb R^{dq}) @<{\pi^*}<< H^{d(q-1)}(\mathbb R^{dq}, S) @<{\delta}<< H^{d(q-1)-1}(S) @<{\iota^*}<< H^{d(q-1)-1} (\mathbb R^{dq})
\end{CD}
\end{equation}
we see that $j^*(\delta(x)) = \delta(y)\not=0$.

Consider a tubular neighborhood $N(\Delta)$ of the diagonal $\Delta(M)$ in $X$. The pair $(X, X_\Delta)$ has the same cohomology as $(B(V), S(V))$, where $V$ is the normal vector bundle of the diagonal, $B(V)$ and $S(V)$ being its unit ball and unit sphere spaces. Since the tangent bundle to $\Delta(M)$ is the same as the tangent bundle of $M$, then $V$ fits to the following exact sequence
$$
0\to T(M) \to \oplus_{g\in G} T(M) \to V\to 0
$$
over $\Delta(M)$, since the restriction of the tangent bundle of $\Map(G, M)$ to $\Delta(M)$ equals $\oplus_{g\in G} TM$. Hence, the group $G$ acts naturally on $\oplus_{g\in G} TM$, and on $V$. 

The manifold $M$ is $Z_p$-oriented and the bundle $V$ is $Z_p$-oriented. Then by Thom's isomorphism $H^*(X, X_\Delta)=u H^*(M)$, where $u$ is the $d(q-1)$-dimensional fundamental class of $V$. From $\delta(x)\not=0$ it follows that $\delta(x) = au$ for some $a\in Z_p^*$. The horizontal exactness of diagram~\ref{leseq} shows that $\pi^*(au) = 0$ and $\pi^*(u)=0$.

Note that the map $\pi^*: H^{d(q-1)} (X, X_\Delta) \to H^{d(q-1)} (X)$ has to be injective, this is a consequence of the Poincar\'e duality and injectivity of the natural map $H_d(N(\Delta)) = H_d(M) \to H_d(X)$. Thus $\pi^*(u)\not=0$, that is a contradiction.
\end{proof}

\begin{lem}
\label{constr-conf-ind}
For any constraint system $\mathcal S$ on $G=(Z_p)^k$ and $d$-dimensional smooth manifold $M$
$$
i_G\left(V\left(G, M, \mathcal S\right)\right) \ge (d-1)(p-1)p^{k-1} + w(\mathcal S) - 1.
$$
\end{lem}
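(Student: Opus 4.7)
Set $B=(d-1)(p-1)p^{k-1}+w(\mathcal S)-1$; the plan is to apply Theorem~\ref{indind} to $V=V(G,M,\mathcal S)$. If I can verify that for every $0<m<B-1$ the group $H^m(V,Z_p)$ is composed of finite-dimensional $Z_p[G]$-modules induced from proper subgroups of $G$, then Theorem~\ref{indind} yields $i_G(V)\ge B$, which is the desired bound.

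The first step is a reduction to $M=\mathbb R^d$: any Euclidean chart $U\cong\mathbb R^d$ in $M$ gives a $G$-equivariant inclusion $V(G,U,\mathcal S)\hookrightarrow V(G,M,\mathcal S)$, and by monotonicity of $i_G$ it suffices to prove the estimate for $M=\mathbb R^d$. After this reduction $V$ becomes the complement in $\mathbb R^{d|G|}$ of the $G$-invariant linear arrangement $\mathcal A=\{L_S:S\in\mathcal S\}$, where $L_S=\{f:G\to\mathbb R^d\mid f|_S\equiv\const\}$ has $\codim L_S=d(|S|-1)\ge d(w(\mathcal S)-1)$.

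The second step is to decompose the cohomology of this complement as a $Z_p[G]$-module. I plan to do so via the Goresky--MacPherson formula (or, more concretely, via Alexander duality applied to the Mayer--Vietoris spectral sequence of the closed cover $\{L_S\}_{S\in\mathcal S}$). Nonempty intersections $L_T=\bigcap_i L_{S_i}$ are indexed by partitions $T$ of $G$ with the property that each $S_i$ lies in a single block, and the contribution of $L_T$ to $H^*(V,Z_p)$ sits in cohomological degrees $\ge(\codim L_T)-1$. The group $G$ permutes the intersection lattice by left multiplication on subsets, so contributions from a single $G$-orbit of partitions assemble into a $Z_p[G]$-module induced from the stabilizer of any representative $T$.

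The crucial third step is to check when such a stabilizer can be the whole of $G$ and to verify that those (potentially non-induced) pieces lie outside the window $0<m<B-1$. A partition $T$ of $G$ is $G$-stable iff its blocks are the left cosets of some subgroup $H\subseteq G$: if $H=G$ then $G\in\mathcal S$ and $\codim L_T=d(p^k-1)$, while if $H\subsetneq G$ each $S_i$ producing $T$ lies in a single coset of $H$, forcing $|H|\ge w(\mathcal S)$ and $\codim L_T=d(p^k-p^k/|H|)$ with $|H|\le p^{k-1}$. A direct calculation in both cases shows $\codim L_T\ge B$, placing the corresponding cohomology in degrees $\ge B-1$. The main obstacle will be to carry out the cohomology decomposition above rigorously as a decomposition of $Z_p[G]$-modules with the exact degree bookkeeping; an alternative that sidesteps the general Goresky--MacPherson formalism is to process one $G$-orbit of $L_S$'s at a time using a Thom/Gysin long exact sequence (as in the proof of Lemma~\ref{conf-space-ind}), leaving the orbit of $L_G$ (if present) to be handled last via the codimension bound above.
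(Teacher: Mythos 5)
Your overall strategy---reduce to $M=\mathbb R^d$, decompose $H^*(V,Z_p)$ via Goresky--MacPherson over the intersection lattice of $\{L_S\}$, discard non-fixed $G$-orbits of intersections as induced modules, and check that the $G$-fixed intersections (coset partitions of subgroups $H$ with $|H|\ge w(\mathcal S)$, hence $\dim L_{\mathcal T}\le dp^{k-1}$) contribute only outside the window of Theorem~\ref{indind}---is exactly the paper's. However, your third step has a genuine error in the direction of the key degree estimate. The summand attached to $L_{\mathcal T}$ inside $\tilde H^i(V)$ is $H_{\codim L_{\mathcal T}-i-1}\bigl(\Delta(\mathcal T),\partial\Delta(\mathcal T)\bigr)$, so it lives in degrees $i\le \codim L_{\mathcal T}-1$, not $i\ge\codim L_{\mathcal T}-1$ as you assert; it reaches down to $i=\codim L_{\mathcal T}-1-\dim\Delta(\mathcal T)$. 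A codimension bound alone therefore gives an \emph{upper} bound on where a fixed stratum contributes, which is the wrong direction. Concretely, for $G=Z_p$ and $\mathcal S=\binom{G}{2}$ your claim would show that everything below degree $d(p-1)-1$ is induced, hence $i_G\ge d(p-1)$, contradicting (for $p>2$) the upper bound $i_G\le (d-1)(p-1)+1$ quoted in the paper from~\cite{vol2007}; in fact the $G$-fixed diagonal contributes a non-induced module (the top homology of the partition lattice) already in degree $(d-1)(p-1)$.

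The missing ingredient is an upper bound on $\dim\Delta(\mathcal T)$: every maximal element $L_S$ of the arrangement has $\dim L_S\le d(q-w(\mathcal S)+1)$ (where $q=p^k$), and every proper inclusion in the lattice drops dimension by at least $d$, so chains from some $L_S$ down to $L_{\mathcal T}$ have length at most $q-w(\mathcal S)+1-\dim L_{\mathcal T}/d$. Feeding this into the degree range shows a $G$-fixed $L_{\mathcal T}$ contributes only in degrees $i\ge (d-1)(q-\dim L_{\mathcal T}/d)+w(\mathcal S)-2\ge (d-1)(p-1)p^{k-1}+w(\mathcal S)-2$, which is what actually places it outside the window; note this is also the only place the additive term $w(\mathcal S)-1$ in the final bound can come from (in your version $w(\mathcal S)$ enters only through $|H|\ge w(\mathcal S)$, which cannot reproduce it). Your fallback via iterated Gysin sequences has the same issue: each Thom shift is by $\codim L_S$, not by the codimension of the deeper intersections, so one still has to control chain lengths in the lattice. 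Two smaller points: a $G$-fixed $L_{\mathcal T}$ with $H=G$ is the full diagonal and arises whenever the sets of $\mathcal T$ connect $G$, not only when $G\in\mathcal S$; and the equivariance of the Goresky--MacPherson isomorphism needs justification (the paper passes through the Leray spectral sequence and works with the associated graded module), which you flag but do not resolve.
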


Note that in~\cite{colusk1976,vol2005,vol2007} the case $M=\mathbb R^m$ and $\mathcal S = \binom{G}{w}$ was considered and some upper bounds on the index were found. The result of paper~\cite{vol2007} states that 
$$
i_G\left(V\left(G, \mathbb R^m, \binom{G}{w}\right)\right) \le (d-1)(p^k-1) + w - 1,
$$
so there is some gap between lower and upper bounds for $k>1$.

\begin{proof}
The index is monotonic, so it suffices to consider the case $M=\mathbb R^d$, which is assumed in this proof.

We are going to apply Theorem~\ref{indind}, so we have to know the cohomology $H^*\left(V(G, M, \mathcal S), Z_p\right)$ first. The coefficients $Z_p$ of the cohomology are omitted in the notation in this proof. The space $V(G, M, \mathcal S)$ is a complement to the set of linear (as subspaces of $\mathbb R^{dp^k}$) subspaces $L_S\subset \Map(G, M)$, here we denote for any $S\in\mathcal S$
$$
L_S = \{f:G\to M\ \text{such that}\  f|_S\ \text{is constant}\}.
$$
For nonempty $\mathcal T\subseteq\mathcal S$ put $L_{\mathcal T} = \bigcap_{S\in\mathcal T} L_S$.

By the result from book~\cite{gormac1988}, cited here by the review~\cite[Corollary~2]{vass2001}, the reduced cohomology of $V(G, M, \mathcal S)$ can be represented as follows
\begin{equation}
\label{gor-mac}
\tilde H^i\left(V(G, M, \mathcal S)\right) = \bigoplus_{\mathcal T\subseteq \mathcal S} H_{qd-i-\dim L_{\mathcal T} - 1} (\Delta(\mathcal T), \partial\Delta(\mathcal T)),
\end{equation}
the sum being taken over distinct subspaces $L_{\mathcal T}$. In~\cite{vass2001} this formula is proved so that the isomorphism may be not natural, so we have to be careful to introduce $G$-action on this formula. Actually this formula becomes natural, if we note that this formula is a particular case of the Leray spectral sequence for the direct image of a sheaf under the inclusion $V(G, M, \mathcal S)\to \Map(G, M)$. Hence, the reduced cohomology $\tilde H^*\left(V(G, M, \mathcal S)\right)$ should be replaced by its associated graded module, obtained from some filtering of the cohomology.

Thus the action of $G$ on the right side of Equation~\ref{gor-mac} describes the action of $G$ on the associated graded module of the cohomology. Let us study this action.

The cohomology to the right in Equation~\ref{gor-mac} is the cohomology of the order complex for the poset of spaces $L_{\mathcal U}\supseteq L_{\mathcal T}$, relative to its subcomplex, spanned by proper inclusions $L_{\mathcal U}\supset L_{\mathcal T}$. The \emph{order complex} of a poset $P$ is a simplicial complex, that has $P$ as the vertex set, and the set of chains in $P$ as the set of simplices.

In this proof, it suffices to note that the dimension of the order complexes in question is no more than $q - \dim L_{\mathcal T}/d - w(\mathcal S) + 1$.

Now consider the $G$-action on the right part of Equation~\ref{gor-mac}. If the space $L_{\mathcal T}$ is not fixed under $G$-action, then the summand, that corresponds to $L_{\mathcal T}$, has $G$-action, induced from the stabilizer of $L_{\mathcal T}$. Theorem~\ref{indind} allows us to ignore such summands. The subspaces $L_{\mathcal T}$ that are fixed under $G$-action have dimension no more than $dp^{k-1}$. Thus, they contribute to they cohomology $\tilde H^i\left(V(G, M, \mathcal S)\right)$ with the following inequality on the dimension
$$
qd - i - \dim L_{\mathcal T} - 1 \le q - \dim L_{\mathcal T}/d - w(\mathcal S) + 1,
$$
then by simple transformations 
\begin{multline*}
i\ge q(d-1) - (d-1)\dim L_{\mathcal T}/d + w(\mathcal S) - 2 \ge\\
\ge (d-1)(q-p^{k-1}) + w(\mathcal S) - 2 = (d-1)(p-1)p^{k-1} + w(\mathcal S) - 2. 
\end{multline*}

We have proved that in dimensions $i\le (d-1)(p-1)p^{k-1}+w(\mathcal S) - 1$, the right part of Equation~\ref{gor-mac} is composed of induced $Z_p[G]$-modules, but this right part is itself a decomposition of the cohomology $\tilde H^i\left(V(G, M, \mathcal S)\right)$. Thus Theorem~\ref{indind} can be applied to complete the proof.
\end{proof}

Now we are going to study the classical configuration space. Denote by $\pm Z_p$ the $\mathbb Z[\mathfrak S_n]$-module, which is $Z_p$ with the action of $\mathfrak S_n$ by the sign of permutation. In the case $p=2$, put $\pm Z_2=Z_2$.

\begin{lem}
\label{pwconstr-ind}
Let $n=p^k$ be a prime power, let $M$ be a smooth manifold of dimension $d\ge 2$, let the constraint system $\mathcal S\subset 2^n$ consist of all pairs $\mathcal S = \binom{[n]}{2}$. Then under the natural action of $\mathfrak S_n$ on the configuration space we have
$$
\uhind_N V(n, M, \mathcal S) = (d-1)(n-1),
$$
where $N=\pm Z_p$ for even $d$, and $N=Z_p$ for odd $d$.
\end{lem}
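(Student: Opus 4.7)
The plan is to reduce to the case $M=\mathbb R^d$ (the one actually handled, per the remark after the introduction) and prove the equality there; the inclusion of a coordinate chart $\mathbb R^d\hookrightarrow M$ gives an $\mathfrak S_n$-equivariant embedding $V(n,\mathbb R^d,\binom{[n]}{2})\hookrightarrow V(n,M,\binom{[n]}{2})$, and monotonicity of $\uhind$ transfers the lower bound to a general $M$. Set $F:=V(n,\mathbb R^d,\binom{[n]}{2})$.

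For the upper bound, $\mathfrak S_n$ acts freely on $F$, so the Borel construction is homotopy equivalent to the unordered configuration space $C_n(\mathbb R^d)=F/\mathfrak S_n$. The Fadell--Neuwirth fibrations $F(\mathbb R^d,j)\to F(\mathbb R^d,j-1)$ have fibers homotopy equivalent to wedges of $(d-1)$-spheres, so inductively $F$---and hence the quotient $C_n(\mathbb R^d)$, the action being free and finite---has cohomological dimension at most $(d-1)(n-1)$. Therefore $H^m_{\mathfrak S_n}(F,N)=H^m(C_n(\mathbb R^d),\mathcal L_N)=0$ for $m>(d-1)(n-1)$ with any local coefficients, which forces the natural map $\pi^*$ to vanish in those degrees and yields $\uhind_N F\le(d-1)(n-1)$.

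For the lower bound, one must exhibit a class $\alpha\in H^{(d-1)(n-1)}(B\mathfrak S_n,N)$ whose pullback to $H^{(d-1)(n-1)}(C_n(\mathbb R^d),\mathcal L_N)$ is nonzero. The natural candidate is an $\mathfrak S_n$-equivariant Thom class of the normal bundle of the fat diagonal in $\mathbb R^{dn}$, constructed in the spirit of the proof of Lemma~\ref{conf-space-ind}; pairing this relative class with the twisted fundamental class of the manifold $C_n(\mathbb R^d)$ via Poincar\'e--Lefschetz duality should yield a nontrivial image of $\pi^*$ in the required top degree. Tracking the orientation character of $\mathfrak S_n$ on $\mathbb R^{dn}$ and on the normal bundle accounts precisely for the sign dichotomy in the choice of $N$: a transposition acts with determinant $(-1)^d$ on $\mathbb R^{dn}$, so the orientation sheaf of $C_n(\mathbb R^d)$ is trivial for $d$ even and the sign sheaf for $d$ odd, which after duality exactly matches $N=\pm Z_p$ in even $d$ and $N=Z_p$ in odd $d$.

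The main obstacle is that restricting to the abelian subgroup $(Z_p)^k\subset\mathfrak S_n$ through the regular embedding---where a quick sign count ($p^{k-1}$ disjoint $p$-cycles, $(-1)^{p-1}=1$ for $p$ odd) confirms that $N$ pulls back to the trivial $Z_p$-module---is not strong enough on its own: Lemma~\ref{constr-conf-ind} with $w(\mathcal S)=2$ only produces $i_{(Z_p)^k}(F)\ge(d-1)(p-1)p^{k-1}+1$, which is strictly smaller than the desired $(d-1)(p^k-1)+1$. The missing strength cannot come from abelian $p$-subgroups alone and has to exploit the full $\mathfrak S_n$-symmetry; this is exactly what the equivariant Gysin/Thom machinery for the fat diagonal is designed to supply, and verifying the non-vanishing of the resulting top-degree class against the explicit Arnold--Cohen generators of $H^*(F)$ is the delicate step of the argument.
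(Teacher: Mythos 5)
Your setup (reduction to $M=\mathbb R^d$, the duality $H^k_{\mathfrak S_n}(F,N)\cong H_{dn-k}$ of the compactified quotient, the upper bound from cohomological dimension, and the orientation-character bookkeeping explaining the dichotomy $N=\pm Z_p$ versus $Z_p$) all match the paper, and your observation that restricting to $(Z_p)^k\subset\mathfrak S_n$ via Lemma~\ref{constr-conf-ind} is too weak is exactly right. But the lower bound --- the only hard part of the lemma --- is not actually proved. The mechanism you propose, an ``equivariant Thom class of the normal bundle of the fat diagonal,'' is not well defined: unlike the thin diagonal in Lemma~\ref{conf-space-ind}, the fat diagonal $\bigcup_{i<j}\{x_i=x_j\}$ is a singular union of subspaces with no normal bundle, so there is no Thom class to pair with, and you yourself defer the decisive verification as ``the delicate step.'' The most telling symptom is that your argument never uses the hypothesis $n=p^k$. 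That hypothesis is essential: for general $n$ the conclusion is false (Theorem~\ref{pwconstr-gen} only achieves $(d-1)(n-D_p(n))$), so any proof that does not invoke it somewhere cannot be complete.

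What the paper does instead is concrete and combinatorial. Following Fuks and Vasil'ev, it builds an $\mathfrak S_n$-cellular decomposition of the one-point compactification $(X',\pt)$ indexed by labelled trees with $d+1$ levels; the quotient $(X'/\mathfrak S_n,\pt)$ has a unique cell $\sigma$ of minimal dimension $n+d-1$, and the boundary coefficients from the $(n+d)$-cells into $\sigma$ are $\pm\binom{n}{j}$, $j=1,\dots,n-1$. Precisely because $n$ is a prime power, all these binomial coefficients vanish mod $p$, so $\sigma$ is a nontrivial homology class, and its Poincar\'e--Lefschetz dual lives in $H^{(d-1)(n-1)}_{\mathfrak S_n}(X,N)$. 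That this dual class lies in the image of $\pi^*$ from $H^*(B\mathfrak S_n,N)$ is then seen by identifying it with $e(W)^{d-1}$, $W$ the standard $(n-1)$-dimensional representation, via the equivariant map $X\to W^{d-1}$ forgetting the last coordinate of each point, whose (nondegenerate) zero set is exactly $\sigma$. If you want to complete your proposal, this coordinate-forgetting map is the correct replacement for your Thom-class construction, and the mod-$p$ vanishing of $\binom{n}{j}$ is where the prime-power hypothesis must enter.
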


\begin{proof} Actually the proof for $d=2$ is given in~\cite{vass1988} and works for arbitrary $d\ge 2$ in the same way. Below we give a short sketch.

We go to the case $M=\mathbb R^d$, consider the configuration space $X=V(n, M, \mathcal S)$, and its one-point compactification $X'=X\cup\pt$. The Poincar\'e-Lefschetz duality tells that there is a natural isomorphism (since $N=(\pm Z_p)^{\otimes d-1}$ and $(\pm Z_p)^{\otimes d}$ gives the orientation of $X$)
$$
H_{dn-k}(X'/\mathfrak S_n, \pt, \pm Z_p) = H^k_{\mathfrak S_n} (X, N).
$$

In~\cite{fuks1970} a certain relative cellular decomposition of the pair $(X', \pt)$ that respects $\mathfrak S_n$-action was constructed. It can be described as follows: consider the graded trees, such that the grades of the vertices are $0,1,\ldots, d$, every vertex of grade $<d$ has children, all the leafs (vertices with no children) have grade $d$, and the number of leafs is $n$. We consider such a tree $T$ along with the following data: for every vertex $v\in T$ its children are ordered, and the leafs have labels $1,2,\ldots, n$ in some order. 

Say that the number $i\in[n]$ \emph{belongs to vertex} $v$, if the leaf with mark $i$ is a descendant of $v$. We say that indexes $i$ and $j$ \emph{split on level $k$}, if they belong to the same vertex $v$ of grade $k-1$, but belong to different vertices $v_i$ and $v_j$ of grade $k$. If the vertices $v_i$ and $v_j$ are ordered as $v_i<v_j$ as children of $v$, we write $i<_k j$ to show that $i$ and $j$ split on level $k$ in this given order.

Now define the open cell $C_T$ in $X$, corresponding to $T$, by the following rule: 
$$
C_T  = \{(x_1, \ldots, x_n)\in M^n : \text{if}\ i<_kj,\ \text{then}\ x_{ik} < x_{jk},\ \text{and}\ \forall l < k\ x_{il}=x_{jl} \},
$$
where $x_{ik}$ is the $k$-th coordinate of $i$-th point.

The cellular decomposition of $X$ can be described informally as follows: every two points $x_i$ and $x_j$ in a configuration $(x_1,\ldots, x_n)$ must be distinct, consider the minimal $k$ such that their coordinates $x_{ik}$ and $x_{jk}$ are different, and if $x_{ik} < x_{jk}$, say that $i<_k j$. The pattern of such relations $i<_k j$ is exactly a tree with $d+1$ levels and $n$ marked leaves at the bottom level. 

The cellular decomposition of $(X', \pt)$ is obtained by taking the closures of $C_T$. It is clear from the definition, that the dimension of a cell $C_T$ is equal to the number of vertices in $T$ minus one. It is also clear that $\mathfrak S_n$ permutes the cells, so a cellular decomposition of $(X'/\mathfrak S_n, \pt)$ is induced. On the level of trees it corresponds to forgetting the labels on the leafs.

This cellular decomposition of $(X'/\mathfrak S_n, \pt)$ has only one cell $\sigma$ of minimal dimension $n+d-1$, that corresponds to the tree with $1$ vertex on each of levels $0,\ldots, d-1$, and $n$ vertices on the bottom level. The cells of dimension $n+d$ correspond to the trees with $1$ vertex on levels $0,\ldots, d-2$, $2$ vertices on level $d-1$, and $n$ vertices on level $d$. The coefficients of the boundary operator between $n+d$-dimensional and $n+d-1$-dimensional cells has the form $\pm\binom{n}{k}$ ($k=1,\ldots,n-1$), see~\cite[Theorem~2.5.1]{vass1988}; this is true for coefficients $Z_p$, or $\pm Z_p$, as we need.

If $n$ is a prime power $n=p^k$, then all the coefficients $\pm\binom{n}{k}$ are zero modulo $p$. Hence, the minimal cell gives a nontrivial element of homology $\sigma\in H_{n+d-1} (X'/{\mathfrak S_n}, \pt, \pm Z_p)$ (in fact it is also true for both coefficients $\pm Z_p$ and $Z_p$) and its Poincar\'e-Lefschetz dual $\xi$ is a nontrivial element of $H_{\mathfrak S_n}^{(d-1)(n-1)}(X, N)$. 

In fact $\xi$ is an image of the element of $H^*(B\mathfrak S_n, N)$ that is the $d-1$-th power of the Euler class of the standard $n-1$-dimensional irreducible representation $W$ of $\mathfrak S_n$. This can be shown by considering the map $\pi : X\to \mathbb R^{(d-1)n}$, that forgets the last coordinate of every point $x_i$, this map is $\mathfrak S_n$-equivariant, its target space is $W^{d-1}$ as $\mathfrak S_n$-representation, and its zero set $\sigma$ (note that all zeros are nondegenerate) must be dual to the Euler class $e(W)^{d-1}$.

Thus by definition $\uhind_N X = (d-1)(n-1)$.
\end{proof}

\section{Existence of symmetric configurations}
\label{symm-config}

In this section we consider certain equations for a system of functions on some configuration space, for such equations we prove existence of their solutions. 

Let us describe the idea more precisely. The results of this section are mostly inspired by the theorems of inscribing regular figures. One example of such theorems is the famous theorem of Schnirelmann~\cite{schn1934} that every simple smooth closed curve in $\mathbb R^2$ has an inscribed square. Some more results on inscribing $Z_p$-symmetric configurations are found in the paper~\cite{mak1989} and other papers of V.V.~Makeev.

Unlike the original problems on inscribing a congruent (or similar) copy of a given configuration, we consider here configurations of points on a manifold and try to find configurations with some metric equalities, which are not required to determine the configuration rigidly. The following particular result has explicit geometric meaning.

\begin{thm}
\label{metric-eq}
Let $p>2$ be a prime, let $M$ be an oriented closed smooth manifold of dimension $d$. Consider a continuous function $\rho:M\times M\to \mathbb R$.

Suppose that we have $d$ elements $g_1,\ldots, g_d$ of the group $G=Z_p$. Then there exists a nonconstant map $\phi: G\to M$ such that (group operation in $G$ is denoted $+$)
$$
\forall g\in G,\ \forall i=1,\ldots, d\quad \rho(\phi(g), \phi(g + g_i)) = \rho(\phi(e), \phi(g_i)).
$$
\end{thm}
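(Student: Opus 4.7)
The plan is to encode the system of equations as the zero set of a $G$-equivariant map from $\Map_\Delta(G, M)$ to a suitable real $G$-representation, and then to derive the existence of a zero by contradiction via Lemma~\ref{conf-space-ind} combined with a direct index computation on the target sphere.

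First, for each $i = 1, \ldots, d$, I define $\psi_i : \Map(G, M) \to \mathbb R^G$ by $\psi_i(\phi)(g) = \rho(\phi(g), \phi(g+g_i))$. With $G$ acting on $\mathbb R^G$ by $f^h(g) = f(h+g)$, the convention $\phi^h(x) = \phi(hx)$ from Section~1 makes each $\psi_i$ into a $G$-map; the relation asked for in the theorem is precisely that $\psi_i(\phi)$ is a constant function, i.e.\ that it lies in the $G$-fixed line $\mathbb R\cdot(1,\ldots,1)$ (its value at $g=e$ being automatically $\rho(\phi(e), \phi(g_i))$). Projecting to the reduced regular representation $W = \mathbb R^G / \mathbb R\cdot(1,\ldots,1)$ and assembling all the $i$'s together, I obtain a continuous $G$-equivariant map
$$
\Psi : \Map_\Delta(G, M) \longrightarrow W^d
$$
whose zero set is exactly the set of $\phi$ satisfying the relations in the theorem statement.

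Now I argue by contradiction: if $\Psi$ were nowhere zero, radial projection in $W^d$ would yield a $G$-map $\Map_\Delta(G, M) \to S(W^d)$. Since $p$ is odd, the reduced regular representation $W$ has no nonzero $G$-fixed vector and neither does $W^d$, so $S(W^d)$ is an honest sphere of dimension $d(p-1) - 1$; the two cohomological bounds listed just after the definition of $i_G$ (namely $\tilde H^m = 0$ for $m < d(p-1)-1$ and $H^m = 0$ for $m > d(p-1)-1$) pinch $i_G(S(W^d)) = d(p-1)$. On the other hand, Lemma~\ref{conf-space-ind} with $k=1$ gives $i_G\bigl(\Map_\Delta(G, M)\bigr) \ge d(p-1) + 1$, which contradicts monotonicity of $i_G$ along the hypothetical $G$-map. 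Hence $\Psi$ must vanish somewhere, producing the desired nonconstant $\phi$ (nonconstancy is built into working inside $\Map_\Delta(G, M)$).

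The real content has been packaged into Lemma~\ref{conf-space-ind}; no further obstacle remains beyond carefully matching the two $G$-actions (on $\Map(G, M)$ and on $W^d$), which is unambiguous once the convention of Section~1 is fixed. It is worth noting that the dimension count is sharp: $\dim_{\mathbb R} W^d = d(p-1)$ matches exactly the gap of $1$ between $i_G\bigl(\Map_\Delta(G, M)\bigr)$ and $i_G(S(W^d))$ provided by the lemma, which is what makes the Borsuk--Ulam style argument go through.
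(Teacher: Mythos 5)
Your proposal is correct and follows essentially the same route as the paper: the paper proves the more general Theorem~\ref{func-eq} by encoding the conditions as a $G$-map into $d$ copies of the reduced regular representation $W$ of $Z_p$ and invoking Lemma~\ref{conf-space-ind} ($i_G(\Map_\Delta(G,M))\ge d(p-1)+1$), exactly as you do. The only (cosmetic) difference is the finishing step: the paper concludes via the nonvanishing pullback of the Euler class of $W^d$ in $H_G^{d(p-1)}$, whereas you conclude via monotonicity of $i_G$ against $i_G(S(W^d))=d(p-1)$ --- two standard and equivalent ways to close a Borsuk--Ulam argument.
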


In the case when $\rho$ is some continuous metric, we obtain more equalities because $\rho$ is symmetric ($\rho(x, y) = \rho(y, x)$). Moreover, in this case the numbers $\rho(\phi(g), \phi(g + g_i))$ have to be positive.

More specially, in the case ($d=2$, $G=Z_5$) Theorem~\ref{metric-eq} gives the following statement: for any continuous metric $\rho$ on a two-dimensional oriented manifold there are five distinct points $p_1,p_2,\ldots,p_5$ such that
$$
\rho(p_1p_2)=\rho(p_2p_3)=\rho(p_3p_4)=\rho(p_4p_5)=\rho(p_5p_1)
$$
and
$$
\rho(p_1p_3)=\rho(p_3p_5)=\rho(p_5p_2)=\rho(p_2p_4)=\rho(p_4p_1).
$$
In fact, Theorem~\ref{func-eq} (see below) tells, that in the two above equalities we may take two distinct metrics $\rho_1$ and $\rho_2$.

Let us state the results more generally. Theorem~\ref{metric-eq} has the following general form, where the functions $\rho$ are replaced by arbitrary continuous functions on the configuration space.

\begin{thm}
\label{func-eq}
Let $p>2$ be a prime, let $M$ be an oriented closed smooth manifold of dimension $d$, let $G=Z_p$, let $\alpha_i : \Map(G, M)\to \mathbb R$ ($i=1,\ldots,d$) be some continuous functions on the configuration space.

Then there exists a configuration $\phi\in \Map_\Delta(G, M)$ such that for any $i=1,\ldots, d$ the number $\alpha_i(\phi^g)$ does not depend on $g\in G$.
\end{thm}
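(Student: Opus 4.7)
The plan is to turn the conclusion into a zero-finding problem for a $G$-equivariant map into a suitable real representation of $G$, and then derive the desired contradiction via the index $i_G$ of Section~\ref{eq-cohomology-p-tori}.

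For each $i=1,\ldots,d$, I would define the map $F_i\colon \Map(G,M)\to \mathbb R^G$ by
$$
F_i(\phi)(g)=\alpha_i(\phi^g),
$$
and give $\mathbb R^G$ the regular $G$-action $(h\cdot\psi)(g)=\psi(hg)$. The identity $(\phi^h)^g=\phi^{hg}$ then makes each $F_i$ a $G$-map. Decompose $\mathbb R^G=\mathbb R\oplus W$, where $\mathbb R$ consists of constant functions and $W$ is the $(p-1)$-dimensional sum-zero subrepresentation; composing $F_i$ with the projection onto $W$ gives a $G$-equivariant $\Pi_i\colon \Map_\Delta(G,M)\to W$, and the condition ``$\alpha_i(\phi^g)$ is independent of $g$'' is exactly $\Pi_i(\phi)=0$. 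Bundling, the theorem reduces to the existence of a zero of the $G$-map
$$
\Pi=(\Pi_1,\ldots,\Pi_d)\colon \Map_\Delta(G,M)\longrightarrow W^d.
$$

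Suppose $\Pi$ has no zero. Since $p>2$, every nontrivial real irreducible of $Z_p$ is a two-dimensional rotation representation without nonzero fixed vectors, so $W^G=0$ and $G$ acts freely on the unit sphere $S(W^d)\cong S^{d(p-1)-1}$. A nonvanishing $\Pi$ produces, after radial projection, a $G$-map $\Map_\Delta(G,M)\to S(W^d)$. By the two displayed dimension-based properties of $i_G$ following its definition (applied in dimension $d(p-1)-1$, with freeness of the action excluding the value $+\infty$), one has $i_G(S(W^d))=d(p-1)$. On the other hand, Lemma~\ref{conf-space-ind} with $k=1$ yields $i_G(\Map_\Delta(G,M))\ge d(p-1)+1$. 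Monotonicity of $i_G$ then contradicts the existence of the $G$-map, so $\Pi$ must vanish somewhere.

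The main subtlety is the first step: verifying that the natural repackaging $\phi\mapsto(\alpha_i(\phi^{\bullet}))_i$ really is $G$-equivariant with the right-regular action on the target, so that the constant subrepresentation captures precisely the desired $G$-invariance of $\alpha_i(\phi^g)$. Once this bookkeeping is settled, the rest is a clean gap-of-one index comparison between Lemma~\ref{conf-space-ind} and the standard index computation for the sphere; no special use of $d\ge 2$ or of orientability beyond what is already absorbed into Lemma~\ref{conf-space-ind} seems needed.
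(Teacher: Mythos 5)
Your argument is correct and follows the paper's proof in all essentials: the same reduction of the conclusion to finding a zero of the $G$-map $\Map_\Delta(G,M)\to W^d$ built from the $\alpha_i$, followed by the same gap-of-one comparison with Lemma~\ref{conf-space-ind}. The only cosmetic difference is in the last step, where the paper pulls back the nonzero Euler class of $W^d$ from $H_G^{d(p-1)}(\pt)$ and invokes $i_G(\Map_\Delta(G,M))\ge d(p-1)+1$ to see that this pullback survives, whereas you phrase the same obstruction contrapositively as monotonicity of $i_G$ applied to the induced map into the free representation sphere $S(W^d)$ with $i_G(S(W^d))=d(p-1)$ --- two packagings of one argument.
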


Now we replace the special constraint system in Theorem~\ref{func-eq} by some arbitrary constraint system and formulate the following result.

\begin{thm}
\label{constr-func-eq}
Let $p>2$ be a prime, let $M$ be a smooth manifold of dimension $d$, let $G=(Z_p)^k$, and let $\mathcal S$ be some constraint system. Put
$$
m=\left\lfloor\frac{(d-1)(p^k-p^{k-1})+ w(\mathcal S) - 2}{p^k-1}\right\rfloor.
$$

Consider some $m$ continuous functions $\alpha_i : V(G, M, \mathcal S)\to \mathbb R$.

Then there exists a configuration $\phi\in V(G, M, \mathcal S)$ such that for every $i=1,\ldots, m$ the number $\alpha_i(\phi^g)$ does not depend on $g\in G$.
\end{thm}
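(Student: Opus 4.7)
The plan is to reduce the statement to the non-existence of a certain $G$-equivariant map into a sphere, and then derive a contradiction from Lemma~\ref{constr-conf-ind}. Let $W=\mathbb R^G/\mathbb R\cdot\mathbf{1}$ be the reduced regular $\mathbb R$-representation of $G=(Z_p)^k$, that is, the regular representation quotiented by the line of constant functions; it has real dimension $p^k-1$. Define the $G$-equivariant \emph{defect map}
$$
F: V(G, M, \mathcal S)\longrightarrow W^m,\qquad F(\phi)_i(g)=\alpha_i(\phi^g)-\frac{1}{|G|}\sum_{h\in G}\alpha_i(\phi^h),
$$
the target carrying the diagonal $G$-action inherited from translations on $\mathbb R^G$. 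The condition in the theorem that each $\alpha_i(\phi^g)$ be independent of $g$ is equivalent to $F(\phi)=0$, so it suffices to show that $F$ must vanish somewhere.

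Assume for contradiction that $F$ is nowhere zero; then radial projection produces a $G$-map $V(G, M, \mathcal S)\to S(W^m)$, and monotonicity of $i_G$ yields $i_G(V(G, M, \mathcal S))\le i_G(S(W^m))$. The crux is the identity $i_G(S(W^m))=m(p^k-1)$. The lower bound $\ge m(p^k-1)$ follows from the connectivity property of $i_G$ listed after its definition, since $\tilde H^j(S(W^m))=0$ for $j<m(p^k-1)-1$. For the matching upper bound, I would argue that the top class of the sphere transgresses in the Serre spectral sequence of $S(W^m)\times_G EG\to BG$ to the mod-$p$ Euler class $e(W)^m\in A_G$; the hypothesis $p>2$ ensures that $W$ decomposes over $\mathbb R$ as a sum of $(p^k-1)/2$ two-dimensional irreducibles, each contributing a nonzero linear form in $Z_p[u_1,\dots,u_k]\subseteq A_G$, and the product of these forms is nonzero in that polynomial algebra. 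This exhibits a nonzero element in $\ker(A_G\to H^*_G(S(W^m)))$ in degree $m(p^k-1)$, giving the required bound.

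Combining with Lemma~\ref{constr-conf-ind} then produces
$$
(d-1)(p-1)p^{k-1}+w(\mathcal S)-1\le i_G(V(G, M, \mathcal S))\le m(p^k-1),
$$
which contradicts the choice $m=\lfloor[(d-1)(p^k-p^{k-1})+w(\mathcal S)-2]/(p^k-1)\rfloor$, because the latter forces $m(p^k-1)\le(d-1)(p-1)p^{k-1}+w(\mathcal S)-2$. I expect the main obstacle to be the Euler-class computation, including ruling out the $+\infty$ alternative for $i_G$ of the sphere; both depend on the oddness of $p$ and on the structure of $A_{(Z_p)^k}$ recalled in Section~\ref{eq-cohomology-p-tori}.
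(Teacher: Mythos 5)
Your argument is correct and is essentially the paper's own proof: the paper deduces Theorem~\ref{constr-func-eq} from Lemma~\ref{constr-conf-ind} ``in the similar way'' to Theorem~\ref{func-eq}, namely by assembling the $\alpha_i$ into a $G$-equivariant section of the trivial bundle with fibre $W^m=(\mathbb R^G/\mathbb R)^m$ and observing that the Euler class $e(W)^m$, nonzero in $A_G$ because for $p>2$ it is a product of nonzero linear forms in $Z_p[u_1,\ldots,u_k]$, survives to $H^*_G(V(G,M,\mathcal S))$ in degree $m(p^k-1)\le (d-1)(p-1)p^{k-1}+w(\mathcal S)-2$, forcing a zero of the section. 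Your reformulation via a nowhere-zero section giving a $G$-map to $S(W^m)$ together with the computation $i_G(S(W^m))=m(p^k-1)$ is the standard equivalent packaging of the same Euler-class obstruction, resting on exactly the same two inputs.
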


In this theorem the number of functions decreased compared to Theorem~\ref{func-eq}, but the constraint system can be arbitrary and the manifold does not have to be closed or oriented.

\begin{proof}[Proof of Theorem~\ref{func-eq}]
For every $\alpha_i$ in Theorem~\ref{func-eq} consider the $G$-map of the configuration space $f_i: \Map_\Delta(G, M)\to \mathbb R^q$ by the formula
$$
f_i(\phi) = \oplus_{g\in G} \alpha_i(\phi^g).
$$
Consider the diagonal $\Delta(\mathbb R)\in\mathbb R^q$ and the space $\mathbb R^q/\mathbb R = W_i$, $f_i$ induces the map $h_i : \Map_\Delta(G, M) \to W_i$. 

We have to prove that the total map $h = h_1\oplus\dots\oplus h_d$ maps some point $\Map_\Delta(G, M)$ to zero in $W = W_1\oplus\dots\oplus W_d$. The latter space has $G$-action, so $h$ can be considered as $G$-equivariant section of a $G$-bundle over $\Map_\Delta(G, M)$.

The considered $G$-bundle over $\Map_\Delta(G, M)$ is a pullback of the $G$-bundle $W$ over $\pt$. The latter bundle (representation) has nonzero Euler class in $H_G^{d(p-1)}(\pt)$ (see~\cite{hsiang1975,vol1992}). Since $i(\Map_\Delta(G, M)) \ge d(p-1) + 1$, the image of this Euler class in $H_G^{d(p-1)} (\Map_\Delta(G, M))$ is nonzero too. This guarantees the existence of a zero and Theorem~\ref{func-eq} is proved.
\end{proof}

Theorem~\ref{constr-func-eq} is deduced from Lemma~\ref{constr-conf-ind} in the similar way.

\section{The genus and the category}
\label{genus-and-cat}

The estimates in the cohomological index of the configuration spaces in Section~\ref{index-conf} give estimates for the genus in the sense of Krasnosel'skii-Schwarz and the equivariant Lyusternik-Schnirelmann category of those spaces.

Let us start from the Lyusternik-Schnirelmann category. We formulate some special cases of definitions from the book~\cite{bart1993}.

\begin{defn}
Let $X$ be a $G$-space, \emph{$G$-category} of $X$ is the minimal size of $G$-invariant open cover (i.e. cover by $G$-invariant open subsets) $\{X_1,\ldots,X_n\}$ of $X$ such that every inclusion map $X_i\to X$ is $G$-homotopic to inclusion of some orbit $G/H\to X$. Denote $G$-category of $X$ by $\Gcat X$.
\end{defn}

This category gives a lower bound on the number of $G$-orbits of critical points for some $G$-invariant $C^2$-smooth function $f$ on $X$, if $f$ is a proper function bounded either from below or from above. One of the main ways to find lower bounds for $G$-category is to use $G$-genus, introduced in~\cite{kr1952,schw1957} for free $G$-actions, in~\cite{schw1966} for fiber bundles, and in~\cite{clp1986,clp1991} for arbitrary $G$-action. In fact there are different types of genus (see also the book~\cite{bart1993} for a detailed discussion), here we use one certain type.

\begin{defn}
Let $\mathcal A$ be some family of $G$-spaces. Let $X$ be a $G$-space, \emph{$\mathcal A$-genus} of $X$ is the minimal size of $G$-invariant open cover (i.e. cover by $G$-invariant open subsets) $\{X_1,\ldots,X_n\}$ of $X$ such that every $X_i$ can be $G$-mapped to some $D\in\mathcal A$. Denote $\mathcal A$-genus of $X$ by $\Agen X$.
\end{defn}

Equivalently (see~\cite{bart1993}), for paracompact spaces and finite groups $G$ the genus can be defined in the following way. Note that in this paper we consider paracompact spaces and finite groups only.

\begin{defn}
Let $X$ be a $G$-space, \emph{$\mathcal A$-genus} of $X$ is the minimal $n$ such that $X$ can be $G$-mapped to a join $D_1*D_2*\dots*D_n$, where $D_i\in\mathcal A$.
\end{defn}

The following theorem estimates the equivariant category by the genus, it follows directly from the definitions.

\begin{thm}
\label{cat-by-genus}
For any $G$-space $X$ denote $\mathcal O_G(X)$ the set of distinct types of orbits $G/H\subseteq X$. Then
$$
\Gcat X\ge \OGXgen X.
$$
\end{thm}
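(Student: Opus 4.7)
The plan is to exhibit that any open cover witnessing the $G$-category is already a cover witnessing the $\mathcal O_G(X)$-genus, so no extra work is needed beyond unwinding the two definitions. Set $n = \Gcat X$ and pick a $G$-invariant open cover $\{X_1,\dots,X_n\}$ of $X$ such that for every $i$ the inclusion $\iota_i\colon X_i\hookrightarrow X$ is $G$-homotopic to the inclusion of some orbit $G/H_i\subseteq X$. Unpacking the phrase ``homotopic to the inclusion of an orbit'' means that $\iota_i$ factors up to $G$-homotopy as
$$
X_i \xrightarrow{\;\phi_i\;} G/H_i \xrightarrow{\;j_i\;} X,
$$
where $j_i$ is the inclusion of an orbit of type $G/H_i$ in $X$, and $\phi_i$ is a $G$-equivariant map.

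The key observation is that, by the very definition of $\mathcal O_G(X)$, the orbit $G/H_i$ belongs to $\mathcal O_G(X)$, since it is the orbit type of an actual orbit in $X$. Hence $\phi_i$ is a $G$-map of $X_i$ into an element of $\mathcal O_G(X)$. Thus the same cover $\{X_1,\dots,X_n\}$ satisfies the requirement in the definition of $\mathcal O_G(X)$-genus, yielding $\OGXgen X \le n = \Gcat X$.

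I do not expect any genuine obstacle; the only thing to be careful about is the direction of the factorization built into $\Gcat$: the definition grants us the intermediate $G$-map $\phi_i\colon X_i\to G/H_i$ (the composition $j_i\circ\phi_i$ being $G$-homotopic to $\iota_i$), which is exactly the data the genus definition asks for, without needing to use the homotopy itself or the inclusion $j_i$.
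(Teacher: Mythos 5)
Your proof is correct and matches the paper's approach: the paper simply asserts that the inequality ``follows directly from the definitions,'' and your argument is exactly the unwinding of those definitions, extracting from each $G$-homotopy the $G$-map $\phi_i\colon X_i\to G/H_i$ into an orbit type belonging to $\mathcal O_G(X)$. Nothing further is needed.
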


The genus $\OGXgen X$ is usually estimated by the following type of genus.

\begin{defn}
If the family $\mathcal A$ contains only one $G$-space, which is the disjoint union of all nontrivial orbit types 
$$
D_G = \bigsqcup_{H\subset G} G/H,
$$
then we denote $\Agen X = \Ggen X$.
\end{defn}

In the paper~\cite{vol2007} this genus is denoted $g_G(X)$. In the sequel we shall mainly use this genus by the following reason. If a $G$-space has $G$-fixed points, then $\OGXgen X=1$. If a $G$-space has no fixed points, then $\OGXgen X\ge \Ggen X$. Still, for free $G$-spaces we use the $\OGXgen X$ itself. 

Now let us state some estimates on the genus of certain configuration spaces.

\begin{thm}
\label{nonconst-conf-gen}
Suppose that $G=(Z_p)^k$, $M$ is a smooth oriented (if $p>2$) closed manifold of dimension $d$. Then
$$
\Ggen \Map_\Delta(G, M) \ge d(p^k-1) + 1.
$$
\end{thm}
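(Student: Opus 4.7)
The plan is to derive this genus lower bound from the index estimate in Lemma~\ref{conf-space-ind} via the general comparison $\Ggen X \ge i_G(X)$, valid for $G=(Z_p)^k$-spaces $X$ without fixed points. The whole proof is then an unfolding of definitions plus one localization input.

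First I would use the join characterization of the genus: if $\Ggen \Map_\Delta(G,M) = n$, there exists a $G$-equivariant map $\Map_\Delta(G,M) \to D_G^{*n}$ to the $n$-fold join, where $D_G = \bigsqcup_{H \subsetneq G} G/H$. Monotonicity of the Volovikov index then gives $i_G(\Map_\Delta(G,M)) \le i_G(D_G^{*n})$.

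Next I would bound $i_G(D_G^{*n})$ from above by $n$. Since $D_G$ is $0$-dimensional, the $n$-fold join $D_G^{*n}$ has dimension $n-1$, so $H^m(D_G^{*n}, Z_p) = 0$ for $m \ge n$. The dimension property of $i_G$ recorded in Section~\ref{eq-cohomology-p-tori} then yields $i_G(D_G^{*n}) \le n$, provided $i_G(D_G^{*n}) < +\infty$. Finiteness follows from the fact that $D_G$ contains only orbits $G/H$ with $H\subsetneq G$, hence has no $G$-fixed point, and neither does $D_G^{*n}$ (fixed points of a join come from fixed points of each factor). For the $p$-torus $G$ the Borel--Smith localization theorem then supplies a nonzero class in $A_G$ mapping to zero in $H^*_G(D_G^{*n}, Z_p)$, so $\Ind_G D_G^{*n} \ne 0$ and $i_G(D_G^{*n}) < +\infty$.

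Combining the above with Lemma~\ref{conf-space-ind} gives
$$
d(p^k-1) + 1 \le i_G(\Map_\Delta(G, M)) \le i_G(D_G^{*n}) \le n = \Ggen \Map_\Delta(G, M),
$$
as required. The only genuinely nontrivial ingredient is the finiteness of $i_G(D_G^{*n})$; once this localization input is granted, the rest of the argument is a direct appeal to the join definition of the genus, monotonicity of the index, and the dimension property from Section~\ref{eq-cohomology-p-tori}.
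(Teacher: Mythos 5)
Your proposal is correct and follows essentially the same route as the paper, which deduces the theorem in one line by combining Lemma~\ref{gen-by-i} (the inequality $\Ggen X\ge i_G(X)$ for fixed-point-free actions of $(Z_p)^k$, cited from Volovikov) with the index bound of Lemma~\ref{conf-space-ind}. The only difference is that you unfold and re-prove Lemma~\ref{gen-by-i} yourself via the join model of the genus, the dimension bound on $i_G$, and localization for finiteness, which is a sound (and self-contained) way to obtain the cited ingredient; just note explicitly that $\Map_\Delta(G,M)$ has no $G$-fixed points because the fixed points of $\Map(G,M)$ are exactly the constant maps, which are removed.
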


\begin{thm}
\label{constr-conf-gen}
Suppose that $G=(Z_p)^k$, $\mathcal S$ is a constraint system on $G$. Then for any $d$-dimensional smooth manifold $M$
$$
\Ggen V(G, M, \mathcal S) \ge (d-1)(p-1)p^{k-1} + w(\mathcal S) - 1.
$$
\end{thm}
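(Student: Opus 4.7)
The plan is to deduce Theorem~\ref{constr-conf-gen} from the already-established Lemma~\ref{constr-conf-ind} by combining it with the general comparison
$$
\Ggen X \ge i_G(X),
$$
valid for every paracompact $G$-space $X$ with $G=(Z_p)^k$. Once this inequality is in hand, substituting the lower bound of Lemma~\ref{constr-conf-ind} immediately yields Theorem~\ref{constr-conf-gen}.

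To prove $\Ggen X \ge i_G(X)$, I would use the join formulation of the genus given in Section~\ref{genus-and-cat}. If $\Ggen X = n$, then there is a $G$-map $X\to D_G^{*n}$, where $D_G=\bigsqcup_{H\subsetneq G}G/H$. Monotonicity of the ideals $\Ind^r$ gives $i_G(X)\le i_G(D_G^{*n})$, so the task reduces to showing $i_G(D_G^{*n})\le n$. Since $D_G$ is $0$-dimensional, its $n$-fold join is a finite $(n-1)$-dimensional $G$-CW complex, and therefore $H^m(D_G^{*n};Z_p)=0$ for all $m>n-1$. By the second bullet following the definition of $i_G$ in Section~\ref{eq-cohomology-p-tori}, this forces $i_G(D_G^{*n})\le n$, provided one can exclude the alternative $i_G(D_G^{*n})=+\infty$.

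Excluding the infinite-index alternative is the step I expect to be the main obstacle, and it crucially uses the fact that every orbit $G/H$ appearing in $D_G$ has a proper stabilizer, so that $D_G$ and hence $D_G^{*n}$ have empty $G$-fixed set. To exploit this, I would embed $D_G$ $G$-equivariantly into the unit sphere $S(V)$ of a $G$-representation $V$ with $V^G=0$ (for instance, take $V$ to be the complement of the invariants in the real permutation representation $\mathbb R[D_G]$; the embedding is injective because no basis vector lies in $\mathbb R[D_G]^G$). Taking the $n$-fold join yields $D_G^{*n}\hookrightarrow S(V)^{*n}=S(V^n)$, and since $(V^n)^G=0$ gives $i_G(S(V^n))=n\dim V<+\infty$ (the classical index computation for a sphere, as recalled at the start of the proof of Lemma~\ref{conf-space-ind}), monotonicity of $i_G$ forces $i_G(D_G^{*n})<+\infty$. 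Combined with the dimensional bound $i_G(D_G^{*n})\le n$ above, this completes the deduction of Theorem~\ref{constr-conf-gen}.
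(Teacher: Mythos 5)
Your proof follows the paper's route exactly: the paper deduces Theorem~\ref{constr-conf-gen} in one line by combining Lemma~\ref{constr-conf-ind} with Lemma~\ref{gen-by-i} (the inequality $\Ggen X\ge i_G(X)$ for fixed-point-free actions of $(Z_p)^k$, which the paper cites from Volovikov rather than proves). The only difference is that you supply your own proof of that cited inequality, and your argument for it --- the join formulation of the genus, the dimension bound forcing $i_G(D_G^{*n})\le n$, and the exclusion of the infinite-index alternative via a $G$-map of $D_G^{*n}$ into the sphere of a fixed-point-free representation --- is sound (note only that one should also record that $V(G,M,\mathcal S)$ has no $G$-fixed points, which holds since fixed points of $\Map(G,M)$ are constant maps and these are excluded by the constraint system).
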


In order to prove the estimates on the genus, we need some well-known lemmas. In the case of a free $G$-action of arbitrary finite group $G$ the genus can be estimated in the following way (see~\cite{schw1957}).

\begin{lem}
\label{gen-by-index}
If $G$ acts freely on $X$ then
$$
\OGXgen X\ge \uhind_G X + 1.
$$
\end{lem}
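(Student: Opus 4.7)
The plan is to use the equivalent join-definition of genus given just above the lemma: $\OGXgen X \le n$ means there is a $G$-map $\phi: X \to D_1 * \cdots * D_n$ with each $D_i \in \mathcal O_G(X)$. Since $G$ acts freely on $X$, the only orbit type in $X$ is the free orbit $G/\{e\} = G$. Hence each $D_i = G$, and $\phi$ is a $G$-map $X \to G^{*n}$, the $n$-fold join of $G$ with itself.

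The key observation is that $G^{*n}$ is an $(n-1)$-dimensional $G$-CW complex on which $G$ acts freely. Therefore the Borel construction $(G^{*n})_G = G^{*n}\times_G EG$ is homotopy equivalent to the quotient $G^{*n}/G$, which is again an $(n-1)$-dimensional CW complex. Consequently, for any coefficient module $M$,
$$
H_G^m(G^{*n}; M) = H^m(G^{*n}/G; M) = 0 \quad\text{for } m \ge n.
$$

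The natural projection $G^{*n}\to\pt$ and the $G$-map $\phi$ give a factorization
$$
\pi_X^* : H_G^m(M) \xrightarrow{} H_G^m(G^{*n}; M) \xrightarrow{\phi^*} H_G^m(X; M).
$$
For $m \ge n$ the middle group vanishes, so $\pi_X^*$ is zero in these dimensions. By the definition of the upper cohomological index, this yields $\uhind_M X \le n-1$ for every $M$, and taking the supremum over $M$ gives $\uhind_G X \le n - 1 = \OGXgen X - 1$, as required.

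There is no real obstacle here; the argument is standard once one adopts the join characterization. The only point worth checking carefully is the identification $(G^{*n})_G \simeq G^{*n}/G$, which holds because the action is free on a $G$-CW complex, and the dimension count $\dim(G^{*n}) = n-1$, which follows from $G$ being $0$-dimensional so that each join with $G$ raises dimension by one.
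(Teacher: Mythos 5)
Your proof is correct and follows exactly the route the paper indicates: it remarks that the lemma "is a simple consequence of the second definition of genus and the fact that $H^*_G(X,M)=H^*(X/G,M)$ for a free $G$-space," which is precisely your factorization of $\pi_X^*$ through the $(n-1)$-dimensional free $G$-complex $G^{*n}$. The only difference is that you spell out the details (identifying each $D_i$ with the free orbit $G$ and doing the dimension count) that the paper leaves to the reader.
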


It should be mentioned that the above lemma is a simple consequence of the second definition of genus and the fact that $H^*_G(X, M) = H^*(X/G, M)$ for a free $G$-space $X$. The following lemma is clear from the definition of genus and existence of an $H$-map $G\to H$, where $H$ acts on $G$ by left multiplications.

\begin{lem}
\label{gen-group-change}
If $G$ acts freely on $X$ and $H\subset G$ is a subgroup, then
$$
\OGXgen X\ge \OHXgen X.
$$
\end{lem}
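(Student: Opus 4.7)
The plan is to take an optimal $G$-invariant cover realizing $\OGXgen X$ and show that it witnesses the $H$-genus of $X$ as well.

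First, I would start with a $G$-invariant open cover $\{X_1,\ldots,X_n\}$ of $X$ with $n=\OGXgen X$ such that for each $i$ there is a $G$-map $\psi_i: X_i\to G/K_i$ for some orbit type $G/K_i\in\mathcal O_G(X)$. Because $G$ acts freely on $X$, every stabilizer is trivial, so each $K_i=\{e\}$ and $\psi_i$ is in fact a $G$-map $X_i\to G$. Since $H\subseteq G$, the same collection $\{X_i\}$ is automatically an $H$-invariant open cover of $X$.

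The key step is to construct an $H$-equivariant map $\phi: G\to H$, where $H$ acts on both $G$ and on itself by left multiplication. To do so, choose a system of representatives $\{g_j\}$ for the right cosets $H\backslash G$, so that every element of $G$ is uniquely written as $h g_j$. Define $\phi(h g_j) = h$. Then for any $h'\in H$ we have $\phi(h'(hg_j)) = \phi((h'h)g_j) = h'h = h'\phi(hg_j)$, so $\phi$ is $H$-equivariant. Composing, $\phi\circ\psi_i$ is an $H$-map $X_i\to H$. Because $G$ acts freely on $X$ the restricted $H$-action is also free, so every $H$-orbit in $X$ is a copy of $H$; hence $H\in\mathcal O_H(X)$. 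Thus $\{X_1,\ldots,X_n\}$ realizes $\OHXgen X\le n=\OGXgen X$, as required.

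There is no real obstacle here; the only mildly nontrivial point is the construction of the equivariant retraction $\phi: G\to H$, which amounts to the observation that $G$, viewed as a free left $H$-set, is a disjoint union of copies of $H$ indexed by the chosen coset representatives. Freeness of the $G$-action is used twice: once to reduce each orbit type $G/K_i$ to $G$ itself, and once to ensure that $H\in\mathcal O_H(X)$.
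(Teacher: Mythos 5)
Your proof is correct and follows exactly the route the paper indicates: the paper dismisses this lemma as ``clear from the definition of genus and existence of an $H$-map $G\to H$, where $H$ acts on $G$ by left multiplications,'' and your argument is precisely that observation carried out in detail (freeness collapses all orbit types to $G$, resp.\ $H$, and the coset-representative retraction $G\to H$ supplies the required $H$-maps).
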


There is a good estimation of the genus by $i_G(X)$, we restate Proposition~4.7 from the paper~\cite{vol2005}, noting Remark~4.5 from the same paper.

\begin{lem}
\label{gen-by-i}
Suppose $G=(Z_p)^k$ acts on $X$ without fixed points. Then 
$$
\Ggen X\ge i_G(X).
$$
\end{lem}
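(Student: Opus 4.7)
The plan is to combine the join characterization of genus with the monotonicity of $i_G$, reducing the lemma to a computation of $i_G$ for the universal $n$-fold join of $D_G$. Suppose $\Ggen X = n$. By the equivalent join-theoretic definition of $\mathcal A$-genus stated above, there is a $G$-map
\[
f : X \longrightarrow Y := \underbrace{D_G * D_G * \cdots * D_G}_{n \text{ copies}},
\]
and monotonicity of $i_G$ gives $i_G(X) \le i_G(Y)$. Hence it suffices to prove that $i_G(D_G^{*n}) \le n$.

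As a warm-up I would verify $i_G(D_G) = 1$. Since $D_G = \bigsqcup_{H \subsetneq G} G/H$ is a disjoint union of discrete proper orbits, the Borel spectral sequence of $(D_G)_G \to BG$ collapses at $E_2$, and the edge map $A_G \to H_G^*(D_G) = \bigoplus_{H \subsetneq G} A_H$ is the sum of the restriction maps $A_G \to A_H$. For each proper subgroup $H \subsetneq (Z_p)^k$ this restriction is noninjective, so the kernel is already nontrivial at $E_2$, giving $\Ind_G^2 D_G \ne 0$ and hence $i_G(D_G) = 1$.

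To pass from this to $i_G(D_G^{*n}) \le n$, I would invoke the subadditivity of the index under joins,
\[
i_G(Y_1 * Y_2) \le i_G(Y_1) + i_G(Y_2),
\]
which is essentially the content of Proposition~4.7 of \cite{vol2005} (refined by its Remark~4.5). Its proof uses the canonical open $G$-cover of the join by two sets that $G$-equivariantly deformation-retract onto $Y_1$ and $Y_2$, combined with a Mayer--Vietoris argument in Borel cohomology: if $\alpha_j \in A_G$ first dies at page $r_j = i_G(Y_j)+1$ of the Borel spectral sequence of $Y_j$, then the product $\alpha_1 \alpha_2$ first dies no later than page $r_1 + r_2 - 1$ of the spectral sequence of $Y_1 * Y_2$. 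Iterating $n-1$ times and using $i_G(D_G) = 1$ then yields the required bound $i_G(D_G^{*n}) \le n$.

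The main technical obstacle is the join-subadditivity statement: it requires the careful bookkeeping of the pages of three Borel spectral sequences and verification that the cup product of the relevant transgressing classes is indeed nonzero on the appropriate page of the spectral sequence for $Y_1 * Y_2$. In writing out a self-contained proof I would follow the Mayer--Vietoris template of \cite{vol2005} rather than re-derive it from scratch.
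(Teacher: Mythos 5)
The paper does not actually prove this lemma: it simply restates Proposition~4.7 (together with Remark~4.5) of \cite{vol2005}. Your task was therefore to reconstruct that argument, and your reconstruction follows what is essentially the standard (and intended) route: the join characterization of $\Ggen$, monotonicity of $i_G$, the computation $i_G(D_G)=1$, and subadditivity of $i_G$ under joins. The structure is sound, but two points need tightening. First, in the warm-up: noninjectivity of each individual restriction $A_G\to A_H$ does not by itself make the kernel of the sum map $A_G\to\bigoplus_{H\subsetneq G}A_H$ nonzero --- that kernel is the \emph{intersection} of the kernels, so you need a single class killed by every proper restriction. Such a class exists: take the product over all nonzero $\alpha\in\Hom(G,Z_p)$ of $\alpha$ (for $p=2$) or of $\beta(\alpha)$ (for $p>2$); this is a nonzero element of the polynomial subalgebra $S_G$ (hence nonzero in $A_G$), and it restricts to zero on every proper $H$ because some $\alpha$ vanishes on $H$. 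Since the spectral sequence of the discrete space $D_G$ degenerates at $E_2$, this exhibits a nonzero element of $\Ind^2_G D_G$ and gives $i_G(D_G)=1$. Second, your citation for join-subadditivity is circular as written: Proposition~4.7 of \cite{vol2005} \emph{is} the genus bound you are trying to prove. The inequality $i_G(Y_1*Y_2)\le i_G(Y_1)+i_G(Y_2)$ is a separate property of the index, established in \cite{vol2000}; since you do sketch its Mayer--Vietoris proof, the mathematics is not actually circular in content, but the reference must be corrected. With these two repairs the proof is complete and matches the argument the paper delegates to the literature.
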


Now Theorem~\ref{nonconst-conf-gen} is deduced from Lemmas~\ref{gen-by-i} and \ref{conf-space-ind}, and Theorem~\ref{constr-conf-gen} is deduced from Lemmas~\ref{gen-by-i} and \ref{constr-conf-ind}.

Let us state some corollary of Theorem~\ref{constr-conf-gen}. 

\begin{cor}
\label{pwconstr-gen0}
Let $G=(Z_p)^k$, let $M$ be a smooth manifold of dimension $d$, let the constraint system $\mathcal S$ consist of all pairs $\mathcal S = \binom{G}{2}$.  Then
$$
\Ggen V(G, M, \mathcal S) \ge (d-1)(p-1)p^{k-1}+1.
$$
\end{cor}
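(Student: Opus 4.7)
The plan is to recognize this as an immediate specialization of Theorem~\ref{constr-conf-gen}. I need only verify two points: first, that $\mathcal S = \binom{G}{2}$ qualifies as a constraint system in the sense of the relevant definition (i.e. it is a nonempty $G$-invariant family of subsets of $G$ with $w(\mathcal S)\ge 2$), and second, that the invariant $w(\mathcal S)$ takes the value $2$ in this case.

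The first point is immediate: $G$ acts on itself by left multiplication, and this action sends two-element subsets to two-element subsets, so $\binom{G}{2}$ is $G$-invariant; it is visibly nonempty since $|G|=p^k\ge 2$. The second point is trivial from the definition $w(\mathcal S)=\min_{S\in\mathcal S}|S|$: every $S\in\binom{G}{2}$ has $|S|=2$, hence $w(\mathcal S)=2$.

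Plugging $w(\mathcal S)=2$ into the conclusion of Theorem~\ref{constr-conf-gen} yields
$$
\Ggen V(G, M, \mathcal S) \ge (d-1)(p-1)p^{k-1} + 2 - 1 = (d-1)(p-1)p^{k-1} + 1,
$$
which is precisely the asserted estimate. Since there is no additional content beyond this substitution, there is no real obstacle; the work has all been done inside Theorem~\ref{constr-conf-gen}, which in turn rests on the index bound of Lemma~\ref{constr-conf-ind} combined with Lemma~\ref{gen-by-i}.
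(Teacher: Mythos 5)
Your proposal is correct and follows exactly the route the paper intends: Corollary~\ref{pwconstr-gen0} is stated as an immediate consequence of Theorem~\ref{constr-conf-gen}, obtained by checking that $\binom{G}{2}$ is a valid $G$-invariant constraint system with $w(\mathcal S)=2$ and substituting into the bound. Nothing is missing.
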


\begin{defn}
Denote the group of permutations of $[n]$ by $\mathfrak S_n$.
\end{defn}

In the papers~\cite{fuks1970,vass1988,rot2008} the classical configuration space $V\left(n, M, \binom{[n]}{2}\right)$ was considered, and the genus of $V\left(n, M, \binom{[n]}{2}\right)$ with respect to $\mathfrak S_n$-action was estimated from below. The estimates for $n$ being a prime power in the case $d=2$ and for $n=2^k$ in the case of arbitrary $d$ were better than in Corollary~\ref{pwconstr-gen0}. Here we prove the appropriate result for $\mathfrak S_n$-genus.

\begin{defn}
For a positive integer $n$ and a prime $p$ denote $D_p(n)$ the sum of digits in the $p$-ary representation of $n$.
\end{defn}

\begin{thm}
\label{pwconstr-gen}
Let $M$ be a smooth manifold of dimension $d\ge 2$, let the constraint system $\mathcal S\subset 2^n$ consist of all pairs $\mathcal S = \binom{[n]}{2}$. Denote $X=V(n, M, \mathcal S)$. Then for every prime $p$ we have
$$
\OSnXgen X \ge (d-1)(n-D_p(n)) + 1.
$$
\end{thm}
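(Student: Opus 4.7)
The plan is to reduce the problem to a lower bound on an equivariant upper cohomological index of a suitable ``block'' subgroup $G' \subset \mathfrak S_n$ adapted to the base-$p$ structure of $n$, and then run the Euler-class argument of Lemma~\ref{pwconstr-ind} block by block. First I would pass to $M = \mathbb R^d$: any chart in $M$ gives an $\mathfrak S_n$-equivariant open inclusion $V(n, \mathbb R^d, \binom{[n]}{2}) \hookrightarrow X$, and the genus is monotonic under such inclusions.

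Writing the base-$p$ expansion $n = p^{i_1} + \cdots + p^{i_m}$ with $m = D_p(n)$, I partition $[n] = B_1 \sqcup \cdots \sqcup B_m$ with $|B_j| = p^{i_j}$ and form the block subgroup
$$G' := \mathfrak S_{p^{i_1}} \times \cdots \times \mathfrak S_{p^{i_m}} \subset \mathfrak S_n.$$
Both groups act freely on $X$, so Lemmas~\ref{gen-group-change} and~\ref{gen-by-index} combine to give
$$\OSnXgen X \;\ge\; \mathop{\mathcal O_{G'}(X)\textrm{-}\mathrm{gen}}\,X \;\ge\; \uhind_{G'} X + 1,$$
and it suffices to prove $\uhind_{G'} X \ge (d-1)(n-m)$. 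To produce a witness, I apply Lemma~\ref{pwconstr-ind} to each block: there is a nontrivial class
$$\xi_j = \pi_{Y_j}^*(e(W_j)^{d-1}) \in H^{(d-1)(p^{i_j}-1)}_{\mathfrak S_{p^{i_j}}}(Y_j, N_j),$$
where $Y_j := V(p^{i_j}, \mathbb R^d, \binom{B_j}{2})$, $W_j$ is the standard $(p^{i_j}-1)$-dimensional representation of $\mathfrak S_{p^{i_j}}$, and $N_j \in \{Z_p, \pm Z_p\}$ (the sign twist appears when $d$ is even). The ``forget everything outside $B_j$'' map $\pi_j : X \to Y_j$ is $G'$-equivariant, so
$$\xi := \pi_1^*(\xi_1) \cup \cdots \cup \pi_m^*(\xi_m) \;=\; \pi_X^*\Bigl(\prod_{j=1}^{m} e(W_j)^{d-1}\Bigr) \in H^{(d-1)(n-m)}_{G'}(X, N), \quad N := N_1 \otimes \cdots \otimes N_m,$$
lies in the image from $H^*_{G'}(\pt, N)$, so showing $\xi \ne 0$ would give the required lower bound on the upper index.

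The main obstacle is precisely this nonvanishing. On the ambient product $Z := Y_1 \times \cdots \times Y_m$ (a $G'$-space containing $X$ as an open subspace, $Z \setminus X$ being a union of codimension-$d$ cross-block diagonals) the K\"unneth product $\xi_1 \times \cdots \times \xi_m$ is nonzero, and $\xi$ is its restriction along $X \hookrightarrow Z$; nothing formally forces that restriction to survive. To verify that it does, I would adapt the Fuks--Vassiliev cellular decomposition of the proof of Lemma~\ref{pwconstr-ind} to $X'/G'$ and identify $\xi$, via equivariant Poincar\'e--Lefschetz duality, with the dual of the ``block-staircase'' cell $\sigma$ whose tree has $m$ children of the root, each a single chain of vertices down to level $d-1$ carrying $p^{i_j}$ leaves (so $\dim \sigma = n + m(d-1)$, codimension $(d-1)(n-m)$). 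The within-block boundary contributions carry coefficients $\pm\binom{p^{i_j}}{k}$ and vanish mod $p$ exactly as in Lemma~\ref{pwconstr-ind}; the cross-block contributions, arising when two or more blocks coincide at some coordinate level, must also be shown to vanish mod $p$, which they do by a careful use of Fuks' boundary formula and Kummer's theorem, using that the decomposition $n = \sum p^{i_j}$ was chosen carry-free in base $p$. This combinatorial verification of the cross-block boundary is the heart of the argument and the step where the hypothesis $m = D_p(n)$ is genuinely used.
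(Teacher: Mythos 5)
Your high-level skeleton matches the paper's: reduce to $\mathbb R^d$, pass to the block subgroup $G'=\prod_j\mathfrak S_{p^{i_j}}$, produce a degree-$(d-1)(n-m)$ class as a product of the Euler-class witnesses of Lemma~\ref{pwconstr-ind}, and finish with Lemmas~\ref{gen-by-index} and~\ref{gen-group-change}. But at the one step that actually requires an argument --- the nonvanishing of that product class --- your proof has a genuine gap. The paper sidesteps the problem entirely: it places the $j$-th block of $p^{i_j}$ points into the $j$-th of $m$ \emph{pairwise disjoint} open balls $M_1,\dots,M_m\subset M$. Configurations of this type form an honest product $\prod_j V\bigl(p^{i_j},M_j,\binom{B_j}{2}\bigr)$ sitting inside $X$ as a $G'$-subspace --- no cross-block diagonals need to be removed, because points lying in disjoint balls are automatically distinct. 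The K\"unneth formula gives the nontriviality of $\prod_j\xi_j$ on this product, and since that class is the image of a class from $H^*(BG',N)$ under the composite $H^*(BG',N)\to H^*_{G'}(X,N)\to H^*_{G'}\bigl(\prod_j V(p^{i_j},M_j,\binom{B_j}{2}),N\bigr)$, the first arrow is already nontrivial, i.e. $\uhind_{G'}X\ge(d-1)(n-m)$. No cellular computation is needed.

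You instead try to restrict the K\"unneth class from the full product $Z=\prod_j Y_j$ (each $Y_j$ using all of $\mathbb R^d$) to its open subset $X$, correctly observe that nothing formally forces the restriction to survive, and propose to verify survival by a computation in the Fuks--Vassiliev cell complex of $X'/G'$. That computation is not carried out, and its central assertion --- that the cross-block boundary coefficients ``vanish mod $p$ \dots\ using that the decomposition $n=\sum p^{i_j}$ was chosen carry-free'' --- has Kummer's theorem backwards: adding $p^{i_j}$ and $p^{i_l}$ with $i_j\ne i_l$ in base $p$ produces \emph{no} carries, so $\binom{p^{i_j}+p^{i_l}}{p^{i_j}}\not\equiv 0\pmod p$. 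The cross-block incidence numbers you would need to kill are exactly the ones that do \emph{not} vanish, so your block-staircase chain is not visibly a cycle, let alone a nontrivial one. Perhaps this could be repaired with correction terms, but as written the key step fails, and the disjoint-ball device renders the whole difficulty moot.
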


Here we formulate the result for arbitrary manifold, but actually we give prove for the case $M=\mathbb R^d$, as in the previous works. The author does not know whether the genus
$$
\OSnXgen V(n, M, \mathcal S)
$$
can be replaced by 
$$
\Zpkgen V(n, M, \mathcal S)
$$ 
in this theorem for the case when the number of points is a prime power $n=p^k$, as it is done in the statement of Corollary~\ref{pwconstr-gen0}.

\begin{proof}[Proof of Theorem~\ref{pwconstr-gen}]
The theorem for prime powers is deduced from Lemmas~\ref{gen-by-index} and \ref{pwconstr-ind} directly. 

Consider the general case: take some $n$ and represent it as a sum of $D_p(n)$ powers of $p$:
$$
n = \sum_{i=1}^{D_p(n)} p^{k_i}.
$$

As in~\cite{vass1988}, we can select $D_p(n)$ disjoint open subsets $M_1,\ldots, M_{D_p(n)}$ of $M$, each being homeomorphic to $\mathbb R^d$. Consider the subspace of $V(n, M, \mathcal S)$ formed by the following configurations: the first $p^{k_1}$ points lie in $M_1$, the other $p^{k_2}$ points lie in $M_2$ and so on. So we can state that 
$$
V(n, M, \mathcal S) \supset \prod_{i=1}^{D_p(n)} V(p^{k_i}, M_i, \mathcal S).
$$
The spaces $X_i=V(p^{k_i}, M_i, \mathcal S)$ have actions of their respective groups $G_i=\mathfrak S_{p^{k_i}}$, and have by Lemma~\ref{pwconstr-ind} some nontrivial cohomology classes in $H_{G_i}^{(d-1)(p^{k_i}-1)}(X_i, N)$ as natural images of some classes in $H^*(BG_i, N)$, $N$ is the same as in Lemma~\ref{pwconstr-ind}. By the K\"unneth formula the product of these classes is a nontrivial natural image of some class from $H^{(d-1)(n-D_p(n))}(BG_1\times\dots\times BG_{D_p(n)}, N)$. Denote $G=G_1\times\dots\times G_{D_p(n)}$, so we see that for the upper $G$-index
$$
\uhind_G V(n, M, \mathcal S)\ge\uhind_G \prod_i X_i\ge (d-1)(n-D_p(n)).
$$
From Lemma~\ref{gen-by-index} it follows that if $X=V(n,M,S)$, then $\OGXgen X\ge (d-1)(n-D_p(n))+1$. Then by Lemma~\ref{gen-group-change} $\OSnXgen X\ge \OGXgen X\ge (d-1)(n-D_p(n))+1$.
\end{proof}

\end{document}